\documentclass[11pt]{amsart}

\usepackage{amsmath,amssymb,amsthm,mathtools}
\usepackage{enumitem}
\usepackage{microtype}
\usepackage[colorlinks=true,linkcolor=blue,citecolor=blue,urlcolor=blue]{hyperref}

\newtheorem{theorem}{Theorem}[section]
\newtheorem{proposition}[theorem]{Proposition}
\newtheorem{lemma}[theorem]{Lemma}
\newtheorem{corollary}[theorem]{Corollary}
\newtheorem{conjecture}[theorem]{Conjecture}
\theoremstyle{definition}
\newtheorem{definition}[theorem]{Definition}
\theoremstyle{remark}
\newtheorem{remark}[theorem]{Remark}

\DeclareMathOperator{\GL}{GL}
\DeclareMathOperator{\Sp}{Sp}
\DeclareMathOperator{\SL}{SL}
\DeclareMathOperator{\Hom}{Hom}
\DeclareMathOperator{\Irr}{Irr}
\DeclareMathOperator{\Cusp}{Cusp}
\DeclareMathOperator{\Nrd}{Nrd}
\DeclareMathOperator{\JL}{JL}
\DeclareMathOperator{\St}{St}
\DeclareMathOperator{\Ind}{Ind}

\DeclareMathOperator{\Sym}{Sym}

\newcommand{\C}{\mathbb C}

\newcommand{\cD}{\mathcal D}
\newcommand{\cB}{\mathcal B}
\newcommand{\ph}{\phi}
\newcommand{\bigboxtimes}{\mathop{\otimes}}

\title[Tempered symplectic distinction for quaternionic $\GL_n(D)$]
{Tempered symplectic distinction for quaternionic $\GL_n(D)$:\ A Conjecture of Prasad}

\author{Mahendra Kumar Verma}
\address{Department of Mathematics, Indian Institute of Technology Roorkee,
Uttarakhand 247667, India}
\email{mahendraverma@ma.iitr.ac.in}

\subjclass[2020]{22E50, 11F70}
\keywords{Quaternion division algebra, distinguished representations, symplectic model,
tempered representations, Jacquet--Langlands correspondence}

\begin{document}

\begin{abstract}
Let $F$ be a non-Archimedean local field of characteristic zero, let $D/F$
be the quaternion division algebra, and put $G_n=\GL_n(D)$ and
$H_n=\Sp_n(D)$. We prove Prasad's revised conjecture on tempered
representations with symplectic period, which may be stated as follows. If $\rho$ is a unitary
$H_r$-distinguished supercuspidal representation and $\delta_{i}(\rho)$ is the generalised Steinberg representation associated to $\rho$ , set
\[
   \cB_0(\rho)=\rho,
   \qquad
   \cB_m(\rho)=\delta_{m+1}(\rho)\times\delta_m(\rho),
   \quad m\ge1.
\]
Then an irreducible tempered representation $\pi$ of $G_n$ is
$H_n$-distinguished if and only if
\[
   \pi\simeq\prod_i\cB_{m_i}(\rho_i).
\]
\end{abstract}

\maketitle

\section{Introduction}

Let $F$ be a non-Archimedean local field of characteristic zero and let $D$
be the quaternion division algebra over $F$. For $n\geq 1$, let
$
G_n=\GL_n(D),
$
the inner form of the split general linear group $\GL_{2n}(F)$, and let
$H_n=\Sp_n(D),
$
the corresponding quaternionic symplectic group, which is the unique inner form of
the split symplectic group $\Sp_{2n}(F)$. An irreducible admissible
representation $\pi$ of $G_n$ is said to be $H_n$-distinguished if
\[
\Hom_{H_n}(\pi,\C)\neq 0.
\]
In this case, we also say that $\pi$ admits a symplectic model. The
symplectic model is unique; see \cite{Verma2018}.

Distinguished representations play a crucial role in studying automorphic forms and representations,
particularly in the context of the Langlands program. In the global context, distinguished
representations help to analyze periods of automorphic forms over subgroups, providing insights into special values of $L$-functions and other arithmetic properties. These representations are also fundamental in
harmonic analysis on $p$-adic groups and their homogeneous spaces.

When $D$ is replaced by $F$, the problem becomes that of representations of $\GL_{2n}(F)$ distinguished by $\Sp_{2n}(F)$. This question was initiated by Heumos and Rallis \cite{HeumosRallis1990} and was further studied by Offen and Sayag, who classified the unitary representations admitting a symplectic period and investigated their relation with Klyachko models and global mixed periods \cite{OffenSayag2007,OffenSayag2008Global,OffenSayag2008Uniqueness}. The Klyachko models interpolate between two extreme cases: the Whittaker model at one end and the symplectic model at the other. Thus the symplectic model can be viewed as one of the extreme cases in the family of Klyachko models.

Our general goal is to understand and classify irreducible representations of \(\GL_n(D)\) which admit a symplectic model. A classification for all irreducible representations appears to be a difficult problem. The  next natural  class to consider is the unitary dual. In \cite{Verma2018}, a conjecture of Prasad was formulated for distinguished unitary representations, but the conjecture is not correct as stated. In this paper we therefore restrict ourselves to a smaller, but still important, class of representations, namely the tempered representations.

The tempered case also has a global motivation. Let \(F\) now be a number field. A non-zero global symplectic period gives local distinction at every place. For cuspidal automorphic representations of general linear groups, the generalized Ramanujan conjecture predicts that the local components are tempered. Thus a classification of distinguished tempered representations describes the local representations which can occur at a tempered place of a cuspidal automorphic representation having a symplectic period. For inner forms, this question is also naturally related to the local and global Jacquet--Langlands correspondence.

In \cite{Verma2018} we studied symplectic periods for the quaternionic pair
$G_n/H_n$. We proved multiplicity one, studied distinction for $G_2$, constructed
supercuspidal distinguished representations, and formulated local and global
questions. Sharma and Verma later studied  ladder and
unitary representations and  some further cases \cite{SharmaVerma2025,SharmaVerma2026}; the
corresponding split arguments may be found in
\cite{Mitra2014,MitraOffenSayag2017,OffenSayag2007}. Hariom Sharma has also
studied symplectic models for Zelevinsky modules of $\GL_n(D)$ 
\cite{SharmaZelevinsky2026}.

The discrete-series conjecture formulated by D.~Prasad was recorded in
\cite[Conjecture~7.1(1)]{Verma2018}. It is now known: a discrete-series representation of $G_n$ is
$H_n$-distinguished if and only if it is supercuspidal and its
Jacquet--Langlands transfer is non-supercuspidal; see
\cite{MSSS2026,Matringe2026}. The unitary statement recorded in
\cite[Conjecture~7.1(2)]{Verma2018} is not correct as stated. In August 2026
Prasad proposed a revised statement for tempered representations
\cite{dprasad}. This leads to following  main result considered
in this paper.

Let $\cD_r$ denote the set of unitary $H_r$-distinguished supercuspidal
representations of $G_r$. For $\rho\in\cD_r$, define
\[
\cB_0(\rho)=\rho,
\qquad
\cB_m(\rho)
=
\delta_{m+1}(\rho)\times\delta_m(\rho),
\qquad m\geq 1.
\]

\begin{theorem}\label{conj:tempered-classification}
	Let $\pi$ be an irreducible tempered representation of $G_n$. Then $\pi$ is
	$H_n$-distinguished if and only if there exist integers
	$m_1,\ldots,m_k\geq 0$ and unitary $H_{r_i}$-distinguished supercuspidal
	representations
	\[
	\rho_i\in\cD_{r_i},
	\qquad 1\leq i\leq k,
	\]
	such that
	\[
	\pi
	\simeq
	\cB_{m_1}(\rho_1)
	\times\cdots\times
	\cB_{m_k}(\rho_k),
	\]
	with
	\[
	\sum_{i=1}^k (2m_i+1)r_i=n.
	\]
	Equivalently, every irreducible tempered $H_n$-distinguished representation
	is obtained by parabolic induction from blocks of the form
	\[
	\rho
	\qquad\text{or}\qquad
	\delta_{m+1}(\rho)\times\delta_m(\rho),
	\]
	where $\rho$ is a unitary distinguished supercuspidal representation.
\end{theorem}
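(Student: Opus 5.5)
We prove the two directions separately. Throughout we rely on three known inputs: the discrete-series classification of \cite{MSSS2026,Matringe2026}; the heredity of distinction under parabolic induction, proved by the closed-orbit argument in \cite{Verma2018,SharmaVerma2025}; and the geometric lemma applied to the $P\backslash G_n/H_n$ double cosets, which are the quaternionic analogues of the orbits used by Offen--Sayag \cite{OffenSayag2007}.

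\emph{Sufficiency.} By the discrete-series result, $\cB_0(\rho)=\rho$ is distinguished. For $\cB_m(\rho)=\delta_{m+1}(\rho)\times\delta_m(\rho)$ we embed it in a standard module and use the ladder/Speh-type results of \cite{SharmaVerma2025}. Specifically, $\delta_{m+1}(\rho)\times\delta_m(\rho)$ is irreducible because it is tempered and unitary induction on $\GL_n(D)$ is irreducible. It is a subquotient of
\[
\nu^{a_1}\rho\times\nu^{b_1}\rho\times\cdots,
\]
where the $\rho$-segments interlace. We then produce an $H$-invariant functional by the open-orbit (Blanc--Delorme/meromorphic continuation) method. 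Take the family $\delta_{m+1}(\rho)\nu^{s}\times\delta_m(\rho)\nu^{-s}$. The open orbit carries the stabiliser $\GL_{(2m+1)r}(D)$-type subgroup, which pairs the two factors through a $\Delta\GL$ period. Because the segments of $\delta_{m+1}$ and $\delta_m$ are linked by one step, the pairing of $\delta_{m+1}(\rho)$ with the contragredient-type restriction of $\delta_m(\rho)$ to a Levi is nonzero via the Jacquet module $r(\delta_{m+1}(\rho))\supset\delta_m(\rho)\nu^{1/2}\otimes\nu^{\ast}\rho$. The leftover $\rho$ is distinguished. The leading coefficient at $s=0$ then gives a nonzero functional. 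Products of blocks follow by heredity.

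\emph{Necessity.} Write $\pi=\delta_{1}\times\cdots\times\delta_t$ with each $\delta_j=\delta_{\ell_j}(\rho_j)$ and $\rho_j$ unitary supercuspidal. Let $\pi$ be distinguished. By the geometric lemma, some double coset $w$ contributes, which yields a nonzero $H$-invariant functional on some Jacquet module twisted by $w$. Following \cite{OffenSayag2007,Mitra2014}, the admissible orbits correspond to involutions $\tau$ on the set of segment pieces, with fixed points carrying symplectic periods and $2$-cycles carrying $\Delta\GL$ periods. Exponent (central character) considerations then constrain the structure. A fixed piece must be a distinguished discrete series of a smaller group, so it is supercuspidal with $\rho\in\cD$. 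A $2$-cycle pairs $\nu^{x}\sigma$ with $\nu^{-x}\sigma^\vee$-type pieces. Temperedness, via Casselman's criterion applied to exponents of the Jacquet modules of the $\delta_{\ell_j}$, forces the pairings to cancel exponents exactly. We then argue by induction on $n$, peeling off the $\delta_j$ with maximal $\ell_j$. Its top piece must be paired either with itself, giving $\ell=1$ and $\rho$ distinguished, or with the adjacent-length segment $\delta_{\ell-1}(\rho)$. This pairing also forces $\rho\in\cD$, because the middle residual piece is a symplectically distinguished $\rho$. Removing that block leaves a distinguished tempered representation of smaller rank.

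The main obstacle is this combinatorial necessity step. There we must rule out every contribution from non-closed orbits except the specific $\delta_{m+1}\times\delta_m$ pairing, and we must show that $\rho$ itself is distinguished rather than merely self-dual. I would first try to reduce to the case where all $\rho_j$ lie in a single cuspidal line. This uses the fact that orbits pairing different cuspidal supports vanish by a central character argument. After that reduction I would mimic Mitra--Offen--Sayag's exponent bookkeeping, using the Jacquet--Langlands constraint, namely that the transfer of $\rho$ is non-supercuspidal, to identify which self-pairings are allowed.
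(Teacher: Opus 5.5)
\textbf{Sufficiency: the pairing is placed on the wrong orbit, and as written no functional is produced.} For $P=P_{md,(m+1)d}$ the $H_n$-orbits are indexed by the radical dimension $r$ of the $k$-plane.

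The open orbit $r=0$ has stabilizer $H_{md}\times H_{(m+1)d}$. It has no $\Delta\GL$ factor. Its graded piece therefore carries an invariant form only if $\delta_m(\rho)$ and $\delta_{m+1}(\rho)$ are both distinguished, and Theorem~\ref{thm:ds} rules this out. Twisting by $\nu^{\pm s}$ changes nothing there, because $\nu|_{H}=1$. So there is no meromorphic family whose leading coefficient you could take. Opposite twists $\nu^{s},\nu^{-s}$ would also break any $\Delta\GL$-pairing condition for $s\neq0$.

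The configuration you actually describe is the closed orbit $r=k$ of totally isotropic $k$-planes, with stabilizer Levi $\Delta(G_{md}\times G_{md})\times H_d$. In it, $\delta_m(\rho)$ is paired through $\Delta G_{md}$ with a Jacquet piece of $\delta_{m+1}(\rho)$, and a leftover $\rho$ is fixed by $H_d$. The precise inputs are:
\begin{itemize}
\item $r_{(d,md)}(\delta_{m+1}(\rho))=\nu^m\rho\otimes\nu^{-1}\delta_m(\rho)$. The shift is $\nu^{\pm1}$, not $\nu^{1/2}$, because $s(\rho)=2$, and this is exactly what matches the condition $\eta\simeq\nu\eta'$.
\item No contragredient enters for this symmetric pair.
\item The closed orbit is the final quotient of the filtration, so the invariant form pulls back to $\cB_m(\rho)$ with no analytic continuation at all.
\end{itemize}
You have the two roles interchanged: heredity is the open-orbit ($H\times H$) argument, while existence of the blocks is a closed-orbit argument. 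Also, the two centered segments are nested, not linked.

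\textbf{Necessity: right shape, but the decisive steps are only asserted.} Your outline matches the paper's rank reduction: peel off a factor $\delta_L(\rho)$ of maximal length, then find $\delta_{L-1}(\rho)$ and a fixed singleton $\rho$. However, the induction does not close in the framework you set up. A good orbit for the fully expanded product $\delta_1\times\cdots\times\delta_t$ gives only combinatorial data on the remaining pieces. A good orbit is merely necessary (Remark~\ref{rem:good-not-sufficient}), so this does not show that the remainder $\pi_0$ is distinguished. You would have to induct on a purely combinatorial classification of good data instead.

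The paper closes the induction by using $P_{Ld,n-Ld}$ with $\sigma$ kept as a single block. The contributing orbit then has a fixed factor $H_{n_0}$, which gives genuine distinction $\Hom_{H_{n_0}}(S,\C)\neq0$ with $S\simeq\pi_0$.

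Your dichotomy also needs the exclusions that carry the proof, for $L\ge2$:
\begin{itemize}
\item The open orbit and every interior cut with $L-t>1$ fail, because the fixed piece $\nu^{-t}\delta_{L-t}(\rho)$ would have to be distinguished, contradicting Theorem~\ref{thm:ds}.
\item The full-pairing orbit fails. It would put $\nu^{-1}\delta_L(\rho)$ into a Jacquet module of $\sigma$, hence $\nu^{-L}\rho$ into the cuspidal support of $\sigma$. By centeredness of tempered factors this forces a factor longer than $L$.
\item $T\simeq\delta_{L-1}(\rho)$ must be a whole factor of $\sigma$, by the endpoint argument.
\end{itemize}
The reduction to a single cuspidal line and the appeal to Casselman's criterion are unnecessary. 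Centeredness of the discrete-series factors is all that temperedness contributes.
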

The revised conjecture says that a tempered representation is distinguished
if and only if it is a product of  representations $ \cB_m(\rho).$ Thus, on each
distinguished supercuspidal line, the lengths of the discrete-series factors
are divided into the singleton $\{1\}$ and pairs $\{m,m+1\}$. The low-rank
examples in \cite{SharmaVerma2025,SharmaVerma2026}, with the split analogues
in \cite{Mitra2014,MitraOffenSayag2017}, already show why a statement in
terms of individual distinguished discrete-series factors cannot be correct.

The blocks have a direct parameter interpretation. If $\rho$ is distinguished, its
parameter is of the form $\tau\otimes S_2$. The block
$\delta_{m+1}(\rho)\times\delta_m(\rho)$ corresponds to
\[
 \tau\otimes(S_{2m+2}\oplus S_{2m})
 =\tau\otimes(S_{2m+1}\otimes S_2),
\]
by the Clebsch--Gordan formula.

We prove first that each $\cB_m(\rho)$ is distinguished, using the closed
orbit for a maximal parabolic and the Jacquet module of
$\delta_{m+1}(\rho)$. The hereditary property then gives the existence
direction of the conjecture.

For the converse, write the tempered representation in the form
\[
   \pi=\delta_L(\rho)\times\sigma,
\]
where $L$ is maximal on the $\rho$-line. We use the Mackey filtration for
the maximal parabolic $P_{Ld,n-Ld}$. First we prove that $\rho$ must be
distinguished. If $L>1$, the only orbit which can contribute has
$r=(L-1)d$. It forces a factor $\delta_{L-1}(\rho)$ in $\sigma$, and the
remaining representation is distinguished. Here we reconstructed the induced representation from its Jacquet constituent. This yields the rank reduction
\[
   \pi\simeq\cB_{L-1}(\rho)\times\pi_0.
\]
Induction proves the converse.

The paper is organized as follows. We fixes notation
and recalls the used form of the geometric lemma in Section~\ref{sec:prelim}. Section~\ref{sec:blocks}
introduces the distinguished supercuspidal lines and the blocks
$\cB_m(\rho)$. Section~\ref{sec:conj} gives the block, adjacent-length and
parameter forms of Prasad's revised conjecture. Section~\ref{sec:Bm-general}
proves the existence of the elementary blocks and records the rank-five
specialization. Section~\ref{sec:converse} proves the converse by rank
reduction.

\pagebreak
\begin{center}
	\large {\textbf{Acknowledgment}}
\end{center}
The author would like to thank Dipendra Prasad for informing him about the revised conjecture considered in this paper, and for several useful discussions, comments, and suggestions during the preparation of the manuscript.

 AI  was used for language revision in the prepration of the article.

\section{Notation and preliminaries}\label{sec:prelim}

\subsection{The symmetric pair}

Let $\overline{x}$ denote the canonical involution on $D$. Let $J_n$ be the
anti-diagonal matrix with all anti-diagonal entries equal to $1$. We define
$H_n$ as
\[
 H_n
 =
 \left\{
 g\in G_n:
 gJ_n\,{}^t\!\overline{g}=J_n
 \right\}.
\]
We write
$\Irr(G_n)
$
for the set of isomorphism classes of irreducible smooth complex
representations of $G_n$.

Let $
 \nu(g)=|\Nrd(g)|_F,\qquad g\in G_n
$ denote the character of $G_n$ given by the absolute value of the reduced norm.
Since every element of $H_n$ has reduced norm one,
 $\nu|_{H_n}=1.$

Consequently,
\begin{equation}\label{eq:twist-dist}
 \Hom_{H_n}(\pi,\C)\neq0
 \quad\Longleftrightarrow\quad
 \Hom_{H_n}(\nu^s\pi,\C)\neq0
\end{equation}
for every $s\in\C$.

All parabolic inductions in this paper are normalized. If
$\pi_i\in\Irr(G_{n_i})$, we write
$
 \pi_1\times\cdots\times\pi_t
$
for the normalized induction to $G_{n_1+\cdots+n_t}$.

\subsection{Cuspidal lines and discrete series}

We denote by
$  \Cusp(G_r)\subset\Irr(G_r)$
the set of irreducible supercuspidal representations of $G_r$. Let
$\rho\in\Cusp(G_r)$. We write
$
 \nu_\rho=\nu^{s(\rho)},
$
where $s(\rho)\in\{1,2\}$ is the reducibility number of the cuspidal line of
$\rho$; see \cite{DeligneKazhdanVigneras1984,Tadic1990,SharmaVerma2026}.

A segment on the $\rho$-cuspidal line is denoted by
\[
[a,b]_\rho
=
\{\nu_\rho^a\rho,\nu_\rho^{a+1}\rho,\ldots,\nu_\rho^b\rho\},
\qquad a\leq b .
\]
We denote by $Q([a,b]_\rho)$ the unique irreducible quotient of the
normalized parabolically induced representation
\[
\nu_\rho^a\rho
\times
\nu_\rho^{a+1}\rho
\times\cdots\times
\nu_\rho^b\rho.
\]
It is the essentially square-integrable representation associated with the
segment $[a,b]_\rho$.

We will mainly work with unitary discrete-series representations. Let $\rho$
be a unitary supercuspidal representation. For $m\geq 1$, define
$\delta_m(\rho)$ ( also denoted by   $\St_m(\rho)$ in the litreture, called generalised Steinberg ) to be the unique irreducible quotient of
\[
\nu^{(1-m)s(\rho)/2}\rho
\times
\nu^{(3-m)s(\rho)/2}\rho
\times\cdots\times
\nu^{(m-1)s(\rho)/2}\rho.
\]
Equivalently,
$
\delta_m(\rho)
=
Q\left(
\left[\frac{1-m}{2},\frac{m-1}{2}\right]_\rho
\right).
$
Thus $\delta_m(\rho)$ is the unitary discrete-series representation
associated with the centered segment of length $m$ on the $\rho$-cuspidal
line. In particular,
$
\delta_1(\rho)=\rho.
$

When $s(\rho)=2$, we have
\[
\delta_m(\rho)
=
\operatorname{quot}\left(
\nu^{1-m}\rho
\times
\nu^{3-m}\rho
\times\cdots\times
\nu^{m-1}\rho
\right),
\]
where $\operatorname{quot}$ denotes the unique irreducible quotient.

Equivalently, in $\nu_\rho$-notation its segment is centered at zero:
\[
 \left[-\frac{m-1}{2},\frac{m-1}{2}\right]_\rho.
\]

\begin{lemma}[Two-block Jacquet module]\label{lem:two-block-jacquet}
Let $\rho$ be a unitary supercuspidal representation of $G_d$, put
$s=s(\rho)$, and let $1\le a\le m-1$. Then
\begin{equation}\label{eq:general-two-block-jacquet}
\begin{split}
 &r_{(ad,(m-a)d),(md)}\bigl(\delta_m(\rho)\bigr)\\
 &\qquad=
 \nu^{s(m-a)/2}\delta_a(\rho)
 \otimes
 \nu^{-sa/2}\delta_{m-a}(\rho).
\end{split}
\end{equation}
More generally, a Jacquet module of $\delta_m(\rho)$ is zero unless every
block size in the chosen Levi is divisible by $d$.
\end{lemma}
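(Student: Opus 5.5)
The plan is to compute everything at the level of the minimal Jacquet module and then use transitivity of Jacquet functors. Put $x_j=\frac{(m-1)s}{2}-(j-1)s$ for $1\le j\le m$, so that $x_1>x_2>\cdots>x_m$ and $x_1=-x_m=\frac{(m-1)s}{2}$. By definition $\delta_m(\rho)$ is the unique irreducible quotient of $\nu^{x_m}\rho\times\cdots\times\nu^{x_1}\rho$. Passing to contragredients and applying the standard duality $(\pi_1\times\cdots\times\pi_t)^\vee\simeq\pi_1^\vee\times\cdots\times\pi_t^\vee$ (together with the reversal of the order of the factors), I will realise $\delta_m(\rho)$ as the unique irreducible subrepresentation of the reversed product
\[
\nu^{x_1}\rho\times\nu^{x_2}\rho\times\cdots\times\nu^{x_m}\rho .
\]
Frobenius reciprocity then shows that the cuspidal character
\[
\nu^{x_1}\rho\otimes\cdots\otimes\nu^{x_m}\rho
\]
occurs in $r_{(d,\ldots,d),(md)}(\delta_m(\rho))$.

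The key input, and the step I expect to be the main obstacle, is that this minimal Jacquet module is \emph{exactly} the one character above, with multiplicity one. I would first try to quote this from the theory of segments for $\GL_n(D)$ (Tadić \cite{Tadic1990}, via \cite{DeligneKazhdanVigneras1984}, with the reducibility number $s(\rho)$ playing the role of the usual step $1$). If an argument is wanted instead, I would proceed by induction on $m$. The geometric lemma shows that the semisimplified minimal Jacquet module of the full induced representation is the sum, over all permutations $w$, of the characters $\bigotimes_j\nu^{x_{w(j)}}\rho$. The reducibility $\nu_\rho^{\pm1}\rho\times\rho$ at step $s$ then splits this sum between the subquotients of the induced representation. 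The segment representation keeps precisely the exponent ordering that is strictly decreasing along the segment. This is the Zelevinsky-type bookkeeping, and it is where care is needed.

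Granting this, I fix $1\le a\le m-1$ and look at $r_{(ad,(m-a)d),(md)}(\delta_m(\rho))$. By transitivity, the minimal Jacquet module of each of its constituents is a summand of the single character above. Since every nonzero admissible representation with supercuspidal support on this line has a nonzero minimal Jacquet module, the two-block Jacquet module must be irreducible, say $\sigma_1\otimes\sigma_2$. Moreover its minimal Jacquet module is
\[
\bigl(\nu^{x_1}\rho\otimes\cdots\otimes\nu^{x_a}\rho\bigr)\otimes\bigl(\nu^{x_{a+1}}\rho\otimes\cdots\otimes\nu^{x_m}\rho\bigr).
\]
Frobenius reciprocity embeds $\sigma_1$ into $\nu^{x_1}\rho\times\cdots\times\nu^{x_a}\rho$. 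The exponents $x_1,\ldots,x_a$ form a segment of length $a$ centred at $\frac{(m-1)s}{2}-\frac{(a-1)s}{2}=\frac{(m-a)s}{2}$. Hence that product is $\nu^{s(m-a)/2}$ times the corresponding centred product, and by the first step its unique irreducible subrepresentation is $\nu^{s(m-a)/2}\delta_a(\rho)$. Similarly, $x_{a+1},\ldots,x_m$ form a segment of length $m-a$ centred at $-\frac{sa}{2}$, which gives $\sigma_2\simeq\nu^{-sa/2}\delta_{m-a}(\rho)$. This proves \eqref{eq:general-two-block-jacquet}.

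For the final assertion, $\delta_m(\rho)$ is a subrepresentation of $\nu^{x_1}\rho\times\cdots\times\nu^{x_m}\rho$, and Jacquet functors are exact, so it suffices to treat the induced representation. For a standard Levi $M$, the geometric lemma expresses the semisimplification of $r_{M,(md)}$ of this product as a sum of terms. Each term is induced from Jacquet modules of the factors $\nu^{x_j}\rho$ along Levi subgroups of $G_d$ of the form $M\cap wG_d^{m}w^{-1}$. Since $\rho$ is supercuspidal, all proper Jacquet modules of $\nu^{x_j}\rho$ vanish. Therefore a term can survive only if each $G_d$-factor lies entirely inside one block of $M$. This forces every block size of $M$ to be a sum of copies of $d$, that is, divisible by $d$; otherwise every term vanishes.
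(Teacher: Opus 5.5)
Your proposal is correct and is essentially the paper's argument. The paper cites the segment Jacquet-module formula (Tadi\'c, Deligne--Kazhdan--Vign\'eras) directly for the Levi of type $(ad,(m-a)d)$ and reads off the centres $s(m-a)/2$ and $-sa/2$; you instead cite the minimal-Levi case of that formula and recover the two-block case by transitivity, Frobenius reciprocity and multiplicity one. Your geometric-lemma proof of the vanishing spells out the paper's remark that every cuspidal atom on the $\rho$-line has rank $d$.

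One minor point: taking contragredients does not by itself reverse the order of the factors; it only gives the analogous embedding for $\delta_m(\rho)^\vee$ on the $\rho^\vee$-line. You do not need that step, because the embedding $\delta_m(\rho)\hookrightarrow\nu^{x_1}\rho\times\cdots\times\nu^{x_m}\rho$ follows immediately from the quoted minimal Jacquet module by Frobenius reciprocity.
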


\begin{proof}
The centered segment defining $\delta_m(\rho)$ is
$
 \left[-\frac{m-1}{2},\frac{m-1}{2}\right]_\rho.
$
For the Levi of type $(ad,(m-a)d)$, the segment Jacquet-module formula cuts
this segment into consecutive pieces of lengths $a$ and $m-a$. Their
centers are $(m-a)/2$ and $-a/2$, respectively. Since
$\nu_\rho=\nu^s$, this gives \eqref{eq:general-two-block-jacquet}. The
vanishing assertion follows from the fact that every cuspidal atom on the
$\rho$-line has rank $d$; see \cite[\S2.4]{SharmaVerma2026} and
\cite{DeligneKazhdanVigneras1984,Tadic1990}.
\end{proof}

\subsection{Tempered representations}

Every irreducible tempered representation of $G_n$ can be written, uniquely
 in the form
\begin{equation}\label{eq:temp-decomp}
 \pi=\delta_1\times\cdots\times\delta_t,
\end{equation}
where every $\delta_i$ is a unitary discrete-series representation of some
$G_{n_i}$ and
$
 n_1+\cdots+n_t=n.
$
For inner forms of general linear groups the product in
\eqref{eq:temp-decomp} is irreducible; see \cite{Tadic1990,Tadic2015,LapidMinguez2016}.

We use repeatedly the hereditary property of the symplectic model:
if every $\sigma_i$ is $H_{n_i}$-distinguished, then
$ \sigma_1\times\cdots\times\sigma_k
$
is $H_{n_1+\cdots+n_k}$-distinguished; see
\cite[Theorem~4.1]{SharmaVerma2026}. This is the hereditary property proved
in the symmetric-space setting in \cite[Proposition~7.1]{Offen2017}; see also
\cite[Theorem~5.4]{MatringeOffenYang2024}.

\subsection{The geometric lemma, good orbits, and their combinatorial data}
\label{subsec:good-orbit-general}

We recall the definition of a good orbit and the criterion used below. The
filtration comes from the geometric lemma of Bernstein--Zelevinsky
\cite[Theorem~5.2]{BernsteinZelevinsky1977}; the orbit calculation used here
is given in \cite[\S\S3--4]{SharmaVerma2026}, following the symmetric-space
analysis of \cite{Offen2017}.

Let $
   P=P_\alpha=M_\alpha U_\alpha\subset G_n,
   \qquad
   \alpha=(n_1,\ldots,n_t),
$
be a standard parabolic subgroup, and let
$
   M_\alpha\simeq G_{n_1}\times\cdots\times G_{n_t}.
$
For a finite-length representation
$
   \sigma=\sigma_1\otimes\cdots\otimes\sigma_t
   \quad\text{of }M_\alpha,
$
we write
\[
   I_P(\sigma)=\Ind_P^{G_n}(\sigma),
\]
where induction is normalized.

We write
 $  X_n=G_n/H_n.$
The double cosets
$  P\backslash G_n/H_n$
are the same as the $P$-orbits in $X_n$. Choose representatives
$ y_1,\ldots,y_N\in G_n$ in an order such that
\[
   \bigcup_{j\leq i}Py_jH_n
\]
is open for every $i$. If $\mathcal V$ is the space of $I_P(\sigma)$, let
\[
   \mathcal V_i
   =
   \{f\in\mathcal V:
      \operatorname{Supp}(f)
      \subset
      \bigcup_{j\leq i}Py_jH_n
   \}.
\]
Then
\begin{equation}\label{eq:mackey-good-filtration}
   0=\mathcal V_0
   \subset\mathcal V_1
   \subset\cdots\subset
   \mathcal V_N=\mathcal V
\end{equation}
is an $H_n$-invariant filtration. For the orbit represented by $y_i$, put
\[
   H_{y_i}=H_n\cap y_i^{-1}Py_i.
\]
The corresponding graded piece is
\begin{equation}\label{eq:mackey-graded-piece}
   \mathcal V_i/\mathcal V_{i-1}
   \simeq
   \operatorname{ind}_{H_{y_i}}^{H_n}
   \left(
      \left.
      (\delta_P^{1/2}\sigma)^{y_i}
      \right|_{H_{y_i}}
   \right),
\end{equation}
where the induction on the right is the usual compactly supported
non-normalized induction.

\begin{definition}[Good orbit]\label{def:good-orbit}
The orbit
\[
   Py_iH_n
   \quad\text{or, equivalently,}\quad
   P\cdot(y_iH_n)\subset X_n
\]
is called a good orbit for $\sigma$ if
\begin{equation}\label{eq:def-good-orbit}
   \Hom_{H_n}
   \left(
      \mathcal V_i/\mathcal V_{i-1},
      \C
   \right)
   \neq0.
\end{equation}
\end{definition}

Thus ``good'' refers to one graded piece of the Mackey geometric lemma
filtration.
\begin{lemma}[Distinction forces a good orbit]
\label{lem:dist-implies-good}
If
$\Hom_{H_n}\bigl(I_P(\sigma),\C\bigr)\neq0,$
then at least one orbit in $P\backslash G_n/H_n$ is good for $\sigma$.
\end{lemma}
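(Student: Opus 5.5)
The plan is to argue directly from the filtration \eqref{eq:mackey-good-filtration} and the left exactness of $\Hom_{H_n}(-,\C)$. Fix a nonzero $\ell\in\Hom_{H_n}(I_P(\sigma),\C)$; here $\mathcal V=\mathcal V_N$ is the space of $I_P(\sigma)$. The filtration is finite and exhaustive and begins at $\mathcal V_0=0$. Let $i$ be the smallest index with $\ell|_{\mathcal V_i}\neq 0$. Such an index exists because $\ell|_{\mathcal V_N}=\ell\neq0$, and $i\ge1$ because $\mathcal V_0=0$.

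By minimality, $\ell|_{\mathcal V_{i-1}}=0$. Hence $\ell|_{\mathcal V_i}$ factors through the quotient $\mathcal V_i/\mathcal V_{i-1}$. It therefore defines a nonzero element of $\Hom_{H_n}(\mathcal V_i/\mathcal V_{i-1},\C)$. Note that $\ell|_{\mathcal V_i}$ is $H_n$-invariant because $\mathcal V_i$ is an $H_n$-invariant subspace, and this is part of the statement of \eqref{eq:mackey-good-filtration}. By Definition~\ref{def:good-orbit}, the orbit $Py_iH_n$ is then good for $\sigma$.

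The only point needing care is that the $\mathcal V_i$ really are $H_n$-stable subspaces with the stated graded pieces \eqref{eq:mackey-graded-piece}. This is the input recalled from Bernstein--Zelevinsky and \cite{SharmaVerma2026}, and we take it as given. In particular, no finiteness issue arises, since $P\backslash G_n/H_n$ is finite and so the filtration has finitely many steps. There is therefore no real obstacle here: the argument is just the elementary fact that a nonzero functional on a finitely filtered space is nonzero on some graded piece, namely the first filtration step on which it does not vanish.
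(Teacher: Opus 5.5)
Your proposal is correct and is essentially the paper's argument: take a nonzero $\ell$, pick the smallest $i$ with $\ell|_{\mathcal V_i}\neq0$, and note that $\ell$ descends to a nonzero $H_n$-invariant functional on $\mathcal V_i/\mathcal V_{i-1}$, so $Py_iH_n$ is good by Definition~\ref{def:good-orbit}. Your extra remarks on the $H_n$-stability of the $\mathcal V_i$ and the finiteness of the filtration are the standing inputs that the paper also takes as given.
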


\begin{proof}
Let
\[
   0\neq\ell\in\Hom_{H_n}(\mathcal V,\C).
\]
Choose the smallest $i$ for which
$ \ell|_{\mathcal V_i}\neq0.$ Then $\ell$ vanishes on $\mathcal V_{i-1}$ and therefore induces a nonzero
$H_n$-invariant functional on
$\mathcal V_i/\mathcal V_{i-1}$.
\end{proof}

\begin{remark}[A good orbit is only a necessary condition in general]
\label{rem:good-not-sufficient}
The converse of Lemma~\ref{lem:dist-implies-good} is not automatic. A good
orbit gives an invariant functional on a graded piece, and such a functional
need not extend to the whole induced representation. One situation in
which the direction can be reversed is a closed orbit occurring as the final
quotient of \eqref{eq:mackey-good-filtration}: a functional on that quotient
pulls back to the whole representation.
\end{remark}

\subsubsection{Refined orbit data}

For later use, we express goodness in the combinatorial form used below.
A $P$-orbit determines a standard Levi subgroup
\[
   L=M_\beta\subset M_\alpha,
\]
where
$
   \beta=(\beta_1,\ldots,\beta_t),
   \qquad
   \beta_i=(n_{i,1},\ldots,n_{i,k_i})
$
is a refinement of $n_i$. Introduce the index set
\[
   \mathfrak I
   =
   \{(i,j):
     1\leq i\leq t,\ 1\leq j\leq k_i\}.
\]
We use two orders on $\mathfrak I$: we write
$ (i,j)\prec(i',j')$
for the lexicographic order, and define partial order
\[
	(i,j)\ll(i',j')
	\quad\Longleftrightarrow\quad
	i<i'.
\]
The second relation remembers only the original block of $M_\alpha$.

The orbit also determines an involution
\[
   \tau:\mathfrak I\longrightarrow\mathfrak I
\]
which preserves the sizes of the refined blocks:
$ n_{i,j}=n_{\tau(i,j)}.$

The orbit condition gives the ordering restriction
\begin{equation}\label{eq:ordering-restriction}
	\tau(i,j+1)\ll\tau(i,j)
	\qquad
	(1\leq j<k_i).
\end{equation}
In particular, as one moves from one Jacquet piece to the next inside a
fixed original factor, the original block containing its partner moves
strictly to the left. Consequently, at most one piece from a given original
factor can be fixed by $\tau$.

\begin{equation}\label{eq:jacquet-constituent-good}
{\rm If~~~~} \qquad	\eta
	=
	\bigboxtimes_{(i,j)\in\mathfrak I}\eta_{i,j}
\end{equation} 
is an irreducible constituent of \(r_{L,M_\alpha}(\sigma)\), then the modulus characters arising from the orbit and from normalized Jacquet functors give a single \(\nu\)-twist on each pair of factors exchanged by the orbit involution. Consequently, for \((i,j)\prec\tau(i,j)\), the good-orbit condition requires
\[
\eta_{i,j}\simeq \nu\,\eta_{\tau(i,j)}.
\]

\begin{proposition}[Representation-theoretic good-orbit criterion]
\label{prop:good-orbit-general}
For a fixed orbit and its associated data $(\beta,\tau)$, the corresponding
graded piece can contribute an $H_n$-invariant functional only if
$r_{L,M_\alpha}(\sigma)$ has an irreducible constituent
\eqref{eq:jacquet-constituent-good} satisfying:
\begin{enumerate}[label=\textup{(\roman*)}]
\item if
$(i,j)\prec\tau(i,j),$
then
\begin{equation}\label{eq:nu-pair-good}
	\eta_{i,j}\simeq \nu\,\eta_{\tau(i,j)}.
\end{equation}
\item if
$ \tau(i,j)=(i,j),$
then
\begin{equation}\label{eq:fixed-good}
	\Hom_{H_{n_{i,j}}}(\eta_{i,j},\C)\neq0.
\end{equation}
\end{enumerate}
If the normalized Jacquet module
$r_{L,M_\alpha}(\sigma)$ is itself irreducible, then these two conditions,
together with \eqref{eq:ordering-restriction}, are equivalent to the orbit
being good.
\end{proposition}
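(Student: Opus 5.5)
The plan is to read everything off the graded piece \eqref{eq:mackey-graded-piece} attached to the orbit of $y_i$, and to pass to the Levi by Frobenius reciprocity and the Jacquet functor. First, since $H_{y_i}$ is closed in $H_n$ and the induction is compact, Frobenius reciprocity for compact induction gives
\[
\Hom_{H_n}\bigl(\mathcal V_i/\mathcal V_{i-1},\C\bigr)\simeq\Hom_{H_{y_i}}\bigl((\delta_P^{1/2}\sigma)^{y_i}\otimes\delta_{H_{y_i}}^{-1}\delta_{H_n},\C\bigr).
\]
Next, following \cite[\S\S3--4]{SharmaVerma2026} and \cite{Offen2017}, I would choose $y_i$ so that $y_iH_{y_i}y_i^{-1}=P\cap y_iH_ny_i^{-1}$ has a Levi decomposition compatible with $L=M_\beta$. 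Its unipotent radical then contains $y_i(U_\beta\cap\ldots)y_i^{-1}$ with $U_\beta$ the unipotent radical of the standard parabolic of $M_\alpha$ with Levi $L$. Its reductive part is the stabiliser $L_\tau$ in $L$ of the twisted involution, namely
\[
L_\tau\simeq\prod_{\tau(i,j)=(i,j)}H_{n_{i,j}}\times\prod_{(i,j)\prec\tau(i,j)}\Delta G_{n_{i,j}},
\]
where $\Delta$ is the twisted diagonal embedding into $G_{n_{i,j}}\times G_{n_{\tau(i,j)}}$. The constraint \eqref{eq:ordering-restriction} is exactly the description of which $\tau$ occur, so I take it as part of the orbit data.

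Since the functional is invariant under the unipotent part, it factors through the $U_\beta$-coinvariants of $\sigma$. It therefore becomes an $L_\tau$-invariant functional on $r_{L,M_\alpha}(\sigma)$, twisted by a character $\chi$ built from $\delta_P^{1/2}$, the normalization of the Jacquet functor, and $\delta_{H_{y_i}}^{-1}\delta_{H_n}$. The key computation is that $\chi$ is trivial on the $H_{n_{i,j}}$ factors, since $\nu$ restricts trivially there, as in \eqref{eq:twist-dist}. On each twisted diagonal $\Delta G_{n_{i,j}}$ it is $\nu^{\pm1}$, which is the single $\nu$-twist recorded before \eqref{eq:jacquet-constituent-good}.

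Now $r_{L,M_\alpha}(\sigma)$ has finite length, so a nonzero functional is nonzero on some irreducible subquotient. This is the same argument as Lemma~\ref{lem:dist-implies-good}, applied to a composition series. That subquotient is an irreducible constituent $\eta=\bigotimes\eta_{i,j}$. A nonzero $L_\tau$-invariant, $\chi$-twisted functional on a tensor product of irreducibles factors as a product of functionals on the individual factors. For a fixed point this gives condition (ii). For a pair, it is a $\Delta G$-invariant pairing on $\eta_{i,j}\otimes\eta_{\tau(i,j)}$ twisted by $\nu$. Because of the twisted diagonal (involving ${}^t\bar g^{-1}$), this amounts to $\eta_{i,j}\simeq\nu\,\eta_{\tau(i,j)}$, which is condition (i) with the sign fixed by the convention $(i,j)\prec\tau(i,j)$.

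For the converse when $r_{L,M_\alpha}(\sigma)$ is irreducible, the argument runs backward. Given (i), (ii) and \eqref{eq:ordering-restriction}, I would tensor the pairings and the $H$-invariant functionals to get a nonzero $\chi$-twisted $L_\tau$-invariant functional on $r_{L,M_\alpha}(\sigma)=\eta$. Composing with the projection from $\sigma$ then gives a functional on $\sigma$, and Frobenius reciprocity lifts it to the graded piece.

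The main obstacle is the modulus character bookkeeping: one must verify that $\delta_P^{1/2}$, $\delta_{U_\beta}^{-1/2}$ from the normalized Jacquet functor, and $\delta_{H_{y_i}}^{-1}$ combine to exactly $\nu^{1}$ on each exchanged pair and to $1$ on the fixed blocks. I would check this first on the maximal case $t=2$ with $\beta=((a,b),(b,a))$ and then reduce the general case to it block by block, following the computation in \cite{Offen2017}.
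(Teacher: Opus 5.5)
Your proposal is correct and follows the same route as the paper. The paper's proof simply cites \cite[Proposition~4.1]{Offen2017} (via \cite[Proposition~4.1]{SharmaVerma2026}) to identify the graded-piece Hom-space with $\Hom_{L_x}(r_{L,M_\alpha}(\sigma),\delta_{Q_x}\delta_Q^{-1/2})$, and \cite[Proposition~4.2]{SharmaVerma2026} to evaluate it on irreducible tensor constituents; these are exactly the two steps you reconstruct, and the paper likewise leaves the modulus-character computation to those references. The one input you should state precisely is that the projection of $P\cap y_iH_ny_i^{-1}$ to $M_\alpha$ is $L_\tau\ltimes U_\beta$ with the full $U_\beta$, not merely some $U_\beta\cap\cdots$: that is what makes the functional factor through the Jacquet module, and what makes the irreducible case an equivalence.
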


\begin{proof}
For the orbit represented by $x$, Proposition~4.1 of \cite{SharmaVerma2026}, following
\cite[Proposition~4.1]{Offen2017}, identifies the Hom-space of the graded piece with
\[
   \Hom_{L_x}
   \left(
      r_{L,M_\alpha}(\sigma),
      \delta_{Q_x}\delta_Q^{-1/2}
   \right).
\]
Proposition~4.2 of \cite{SharmaVerma2026}, using the orbit description of
\cite{Offen2017}, evaluates this Hom-space on an
irreducible tensor representation and gives exactly
\eqref{eq:nu-pair-good} and \eqref{eq:fixed-good}. The ordering condition
\eqref{eq:ordering-restriction} is the orbit condition from
\cite[\S3.3]{SharmaVerma2026}; compare \cite[Lemma~3.2]{Offen2017}. If the
Jacquet module is irreducible, no
passage to another constituent is necessary, and the criterion is an
equivalence.
\end{proof}

\subsubsection{Equivalent segment data}

For the tempered representations considered in this paper, the inducing
factors are essentially square-integrable segment representations, and the
criterion can be written in terms of segments.

Suppose
\[
   \sigma
   =
   Q(\Delta_1)\otimes\cdots\otimes Q(\Delta_t),
\]
where
$
   \Delta_i=[a_i,b_i]_{\rho_i}
$
is a segment on the supercuspidal line of
$\rho_i\in\Cusp(G_{d_i})$. For the refinement
$
   \beta_i=(n_{i,1},\ldots,n_{i,k_i}),
$
the Jacquet module is zero unless
$
   n_{i,j}=d_i\ell_{i,j}
$
for positive integers $\ell_{i,j}$ summing to the length of $\Delta_i$.
When it is nonzero, $\Delta_i$ is cut into consecutive subsegments
\begin{equation}\label{eq:segment-good-decomposition}
   \Delta_i
   =
   \Delta_{i,1}\sqcup\cdots\sqcup\Delta_{i,k_i},
\end{equation}
starting from the right endpoint, and
\begin{equation}\label{eq:segment-good-jacquet}
   r_{\beta_i,(n_i)}Q(\Delta_i)
   =
   Q(\Delta_{i,1})
   \otimes\cdots\otimes
   Q(\Delta_{i,k_i}).
\end{equation}
Therefore
$r_{L,M_\alpha}(\sigma)
   =
   \bigboxtimes_{(i,j)\in\mathfrak I}
   Q(\Delta_{i,j})
$
is irreducible.

\begin{definition}[Combinatorially good segment data]
\label{def:combinatorial-good}
The segment decomposition
\eqref{eq:segment-good-decomposition}, together with the orbit involution
$\tau$, is called \emph{combinatorially good} if:
\begin{enumerate}[label=\textup{(C\arabic*)}]
\item
$ \tau(i,j+1)\ll\tau(i,j)
   \qquad(1\leq j<k_i);$

\item for every two-cycle, oriented by
$(i,j)\prec\tau(i,j)$,
$
   Q(\Delta_{i,j})
   \simeq
   \nu\,Q(\Delta_{\tau(i,j)});
$
\item for every fixed point,
$
   Q(\Delta_{i,j})
   \quad\text{is }H_{n_{i,j}}\text{-distinguished}.
$
\end{enumerate}
\end{definition}

\begin{corollary}[Good orbit $\Longleftrightarrow$ combinatorially good data]
\label{cor:good-combinatorial}
Let
\[
   \sigma
   =
   Q(\Delta_1)\otimes\cdots\otimes Q(\Delta_t).
\]
For a fixed $P_\alpha$-orbit whose Jacquet refinement is nonzero, the orbit
is good for $\sigma$ if and only if its associated segment data are
combinatorially good in the sense of
Definition~\ref{def:combinatorial-good}.
\end{corollary}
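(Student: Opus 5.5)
This follows directly from Proposition~\ref{prop:good-orbit-general}, once we know that the normalized Jacquet module $r_{L,M_\alpha}(\sigma)$ is irreducible for every orbit with nonzero Jacquet refinement. The plan is to check irreducibility first and then translate the three conditions of that proposition into (C1)--(C3).

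\textbf{Step 1: irreducibility.} Since $\sigma=Q(\Delta_1)\otimes\cdots\otimes Q(\Delta_t)$ is an external tensor product, the Jacquet functor $r_{L,M_\alpha}$ acts factorwise:
\[
r_{L,M_\alpha}(\sigma)=\bigotimes_i r_{\beta_i,(n_i)}Q(\Delta_i).
\]
By Lemma~\ref{lem:two-block-jacquet} and its iterated segment form \eqref{eq:segment-good-jacquet}, each factor is zero unless $d_i$ divides every block $n_{i,j}$. When it is nonzero, it equals the single irreducible $Q(\Delta_{i,1})\otimes\cdots\otimes Q(\Delta_{i,k_i})$, obtained by cutting $\Delta_i$ from the right endpoint. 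The point to check is that the Jacquet module of an essentially square-integrable segment representation, taken along a standard Levi, is irreducible and not merely has this as one constituent. This is the standard Zelevinsky/Tadić fact for inner forms (with $\nu_\rho$ in place of $\nu$), cited from \cite{Tadic1990,DeligneKazhdanVigneras1984}. So under the hypothesis that the refinement is nonzero, $r_{L,M_\alpha}(\sigma)=\bigotimes_{(i,j)}Q(\Delta_{i,j})$ is irreducible.

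\textbf{Step 2: matching conditions.} By the last sentence of Proposition~\ref{prop:good-orbit-general}, the orbit is good if and only if three things hold: the ordering restriction \eqref{eq:ordering-restriction}, condition (i) with $\eta_{i,j}=Q(\Delta_{i,j})$, and condition (ii). These are literally (C1), (C2) and (C3). Note that (C1) is automatically satisfied by the $\tau$ attached to an actual orbit, but it costs nothing to include it. Both directions of the corollary follow.

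\textbf{Expected obstacle.} The only real issue is justifying the equivalence part of Proposition~\ref{prop:good-orbit-general}: that an irreducible Jacquet module makes the Hom-space of the graded piece nonzero exactly under (i) and (ii). Beyond that, it is ensuring that (C2) is read with the correct orientation and twist, namely $\nu$ rather than $\nu_\rho$, since the orbit modulus characters produce a $\nu$-twist. Both points are imported from \cite[Prop.~4.1–4.2]{SharmaVerma2026}. I would simply cite them rather than recompute the modulus characters.
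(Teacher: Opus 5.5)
Your proposal is correct and follows the paper's proof. The paper also notes, via \eqref{eq:segment-good-jacquet}, that $r_{L,M_\alpha}(\sigma)=\bigotimes_{(i,j)}Q(\Delta_{i,j})$ is irreducible, and then applies the irreducible (equivalence) case of Proposition~\ref{prop:good-orbit-general}, whose conditions are exactly (C1)--(C3).
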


\begin{proof}
By \eqref{eq:segment-good-jacquet}, the relevant normalized Jacquet module is
the single irreducible tensor product
\[
   \bigboxtimes_{(i,j)}
   Q(\Delta_{i,j}).
\]
The result is therefore the irreducible case of
Proposition~\ref{prop:good-orbit-general}.
\end{proof}

The fixed-point condition can be made still more explicit. Since
$Q(\Delta_{i,j})$ is essentially square-integrable,
Theorem~\ref{thm:ds}, together with twist invariance
\eqref{eq:twist-dist}, gives
\[
   Q(\Delta_{i,j})\text{ fixed and distinguished}
   \quad\Longrightarrow\quad
   \Delta_{i,j}\text{ has length }1,
\]
and the underlying supercuspidal representation is distinguished, up to an
unramified twist.

We can conclude that  every refined segment piece is either
	paired with a same-rank piece differing by one $\nu\text{-twist,}$
	or is a distinguished singleton fixed by $\tau$
subject always to the strict ordering rule
\[
   \tau(i,j+1)\ll\tau(i,j).
\]

If the original supercuspidal line has no symplectic model, fixed points are
impossible. In that case this reduces to the fixed-point-free good
decomposition used in \cite[Definition~5.1]{SharmaVerma2026}; this is the quaternionic form of
the decomposition used in \cite{MitraOffenSayag2017}. On a distinguished supercuspidal line, fixed singleton pieces are allowed.

\subsubsection{The maximal-parabolic case}
For our purpose, we will specialize above discussion to the maximal paraboilic. 
Let
$ P=P_{k,n-k},
   \qquad
   k\leq n-k,$
and
 $  \sigma=\lambda_1\otimes\lambda_2.
$
Then $H_n$-orbits are indexed by
$  0\leq r\leq k.$

For the orbit $O_r$, the refined Levi is obtained from
\[
   k=r+(k-r),
   \qquad
   n-k=(n-k-r)+r.
\]
The two rank-$r$ pieces are exchanged by $\tau$, while the pieces of ranks
$k-r$ and $n-k-r$ are fixed. Consequently the abstract criterion above
becomes
\begin{equation}\label{eq:maximal-good-general}
\begin{split}
   &\Hom_{M_{k,r}}
   \left(
      r_{(r,k-r),(k)}(\lambda_1)
      \otimes
      r_{(n-k-r,r),(n-k)}(\lambda_2),
      \nu
   \right)\neq0,\\
   &\qquad
   M_{k,r}
   \simeq
   \Delta(G_r\times G_r)
   \times H_{k-r}
   \times H_{n-k-r}.
\end{split}
\end{equation}
This is the maximal-parabolic form of
\cite[Corollary~4.1]{SharmaVerma2026}, obtained from the geometric lemma
\cite{BernsteinZelevinsky1977} and the orbit calculation of \cite{Offen2017}.
Formula
\eqref{eq:maximal-good-general} is the maximal-parabolic form used later in
the proof of converse.

\section{Distinguished supercuspidal lines and elementary blocks}
\label{sec:blocks}

Let $\cD_r$ denote the set of unitary supercuspidal representations
$\rho$ of $G_r$ satisfying
\[
 \Hom_{H_r}(\rho,\C)\neq0.
\]
Set
\[
 \cD=\bigcup_{r\geq1}\cD_r.
\]

The discrete-series case is the starting point for the tempered conjecture.
We recall first the conjecture formulated by D.~Prasad and recorded in
\cite[Conjecture~7.1(1)]{Verma2018}.

\begin{conjecture}[Prasad's discrete-series conjecture]\label{conj:prasad-ds}
Let $\delta$ be an irreducible discrete-series representation of $G_n$ and
write its Weil--Deligne parameter in the form
\[
   \phi_\delta=\tau\otimes S_a,
\]
where $\tau$ is an irreducible representation of $W_F$ and $S_a$ is the
$a$-dimensional irreducible representation of $\SL_2(\C)$. Then $\delta$
is $H_n$-distinguished if and only if $\delta$ is supercuspidal,
$\tau$ is odd-dimensional, and
  $ a=2.$

\end{conjecture}

Thus the parameter predicted for a distinguished supercuspidal
representation is directly of the form
\begin{equation}\label{eq:dist-cusp-param}
   \phi_\rho=\tau_\rho\otimes S_2,
   \qquad
   \dim(\tau_\rho)\ \text{odd}.
\end{equation}

This conjecture is now known in the form needed here.

\begin{theorem}[Discrete-series distinction]\label{thm:ds}
Let $\delta$ be an irreducible discrete-series representation of $G_n$.
Then
\[
 \Hom_{H_n}(\delta,\C)\neq0
\]
if and only if $\delta$ is supercuspidal and its Jacquet--Langlands transfer
$\JL(\delta)$ to $\GL_{2n}(F)$ is non-supercuspidal.
Equivalently, the distinguished discrete-series representations are exactly
the supercuspidal representations predicted by
Conjecture~\ref{conj:prasad-ds}. See \cite[Corollary~1.4]{Matringe2026}; and
\cite{MSSS2026}.
\end{theorem}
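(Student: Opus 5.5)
The plan splits the statement into the non-supercuspidal and the supercuspidal discrete series, treating the first locally and the second by a global argument. Write $\delta=\delta_m(\rho)$ with $\rho\in\Cusp(G_d)$ unitary and $n=md$. The first step is to show that $\delta_m(\rho)$ is \emph{not} $H_n$-distinguished when $m\ge2$. Since $\delta_m(\rho)$ is the unique irreducible quotient of the standard module
\[
\nu_\rho^{(m-1)/2}\rho\times\delta_{m-1}\bigl(\nu_\rho^{-1/2}\rho\bigr),
\]
a nonzero $H_n$-invariant functional on $\delta$ pulls back to this induced representation. By Lemma~\ref{lem:dist-implies-good}, the induced representation then has a good orbit for the maximal parabolic $P_{d,(m-1)d}$. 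There are only the orbits $O_r$ with $r\in\{0,d\}$, and the Jacquet modules are the irreducible ones of Lemma~\ref{lem:two-block-jacquet}. So Corollary~\ref{cor:good-combinatorial} and \eqref{eq:maximal-good-general} reduce goodness to explicit twist identities on the $\rho$-line.

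For $r=d$, condition (C2) requires the cuspidal factor $\nu_\rho^{(m-1)/2}\rho$ to equal $\nu$ times the \emph{left} end $\nu_\rho^{-(m-1)/2}\rho$ of the segment of $\delta_{m-1}(\nu_\rho^{-1/2}\rho)$. This forces $(m-1)s(\rho)=1$, so $m=2$ and $s(\rho)=1$. The remaining fixed pieces must then be distinguished. For $r=0$, the fixed piece $\delta_{m-1}$ with $m-1\ge2$ is excluded by induction on $m$. The residual cases, namely $m=2$ and the case where $\rho$ itself is distinguished, must be handled via the central character, using $\nu|_{H}=1$ and \eqref{eq:twist-dist}, together with a symmetric standard module. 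If necessary I would compare the functional from both orderings, $\nu_\rho^{a}\rho\times\delta_{m-1}$ and $\delta_{m-1}\times\nu_\rho^{-a}\rho$, which have opposite exponents: a functional surviving in both forces the two incompatible identities above. This gives the ``only if supercuspidal'' half.

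The second step treats supercuspidal $\rho$, and here I would argue globally. I would globalize $\rho$ to a cuspidal automorphic representation $\Pi$ of $\GL_n(D_{\mathbb A})$ with a nonzero $\Sp_n(D)$-period, choosing $D$ ramified at the chosen place and using a Poincaré-series construction that keeps the local functional. I would then invoke the global characterization that a cuspidal representation of the inner form has nonzero symplectic period exactly when its global Jacquet--Langlands transfer is a residual Speh representation $MW(\Sigma,2)$ of $\GL_{2n}(\mathbb A)$. This yields the local parameter $\tau\otimes S_2$, hence $\JL(\rho)$ is non-supercuspidal. For the converse, I would start from $\rho$ with $\JL(\rho)$ non-supercuspidal, globalize it to a residual $MW(\Sigma,2)$ whose JL transfer is cuspidal, and deduce nonvanishing of the period, hence local distinction by Verma's multiplicity one \cite{Verma2018}.

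I expect the main obstacle to be the globalization with period control. Simultaneously prescribing the local component, the nonvanishing of the symplectic period, and the cuspidality of the transfer requires a relative trace formula or a simple-trace-formula argument with a matrix coefficient of $\rho$ as test function. In practice I would take this step from \cite{Matringe2026,MSSS2026}.
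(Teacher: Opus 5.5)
Your local half does not work as written. First, the inducing order is reversed. By Lemma~\ref{lem:two-block-jacquet} with $a=1$, $r_{(d,(m-1)d),(md)}(\delta_m(\rho))=\nu_\rho^{(m-1)/2}\rho\otimes\delta_{m-1}(\nu_\rho^{-1/2}\rho)$. Frobenius reciprocity therefore makes $\delta_m(\rho)$ the unique irreducible \emph{subrepresentation} of $\nu_\rho^{(m-1)/2}\rho\times\delta_{m-1}(\nu_\rho^{-1/2}\rho)$, not its quotient; the unique irreducible quotient is the non-tempered Langlands quotient. Compare $\St\hookrightarrow\nu^{1/2}\times\nu^{-1/2}$ for $\GL_2(F)$. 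An invariant form on a subrepresentation does not pull back, so your orbit computation says nothing about $\delta_m(\rho)$. The same holds for your second ordering $\delta_{m-1}(\nu_\rho^{1/2}\rho)\times\nu_\rho^{-(m-1)/2}\rho$, which also contains $\delta_m(\rho)$ as a subrepresentation. The module you want is $\nu_\rho^{-(m-1)/2}\rho\times\delta_{m-1}(\nu_\rho^{1/2}\rho)$. For it, the orbit $r=d$ would need $\nu^{-s(m-1)/2}\rho\simeq\nu^{1-s(m-3)/2}\rho$, i.e.\ $s(\rho)=-1$, so that orbit is never good. The open orbit needs $\rho$ and $\delta_{m-1}(\rho)$ to be distinguished. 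Induction then disposes of $m\ge3$ and of $m=2$ with $\rho\notin\cD$, but only once the base case below is settled.

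The genuine gap is the case $m=2$, $\rho\in\cD$, i.e.\ $\delta_2(\rho)$ with parameter $\tau\otimes S_4$; your whole induction rests on it. Here $\nu_\rho^{-1/2}\rho\times\nu_\rho^{1/2}\rho$ really is $H_{2d}$-distinguished: its open orbit is good, and it is distinguished by the hereditary property. By second adjointness, it is the only induction from an irreducible representation of a proper standard Levi that has $\delta_2(\rho)$ as a quotient. The Mackey filtration gives only necessary conditions (Remark~\ref{rem:good-not-sufficient}), so it cannot decide whether the invariant form is nonzero on the other constituent or factors through $\delta_2(\rho)$.

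Your suggested remedies do not help. The central character gives no constraint, since $Z\cap H_{2d}=\{\pm1\}$ and $\omega_{\delta_2(\rho)}(-1)=\omega_\rho(-1)^2=1$. Twist invariance is vacuous here. In the opposite ordering $\delta_2(\rho)$ is a subrepresentation and the open orbit is again good, so no incompatible identities appear. Excluding this case needs genuinely non-combinatorial input, e.g.\ the behaviour of the open-orbit intertwining period on $\nu^{-x}\rho\times\nu^{x}\rho$ at the reducibility point, or a global argument.

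Your supercuspidal half is a deferral rather than a proof. The period-preserving globalization and the characterization of symplectic periods of cuspidal representations of $\GL_n(D_{\mathbb A})$ via residual Speh representations are exactly the hard content, and you take them from \cite{Matringe2026,MSSS2026}. That is also all the paper does: it gives no proof of Theorem~\ref{thm:ds} and imports the whole statement from those references. So your local argument, the only independent part of the proposal, would have to stand on its own, and as written it does not.
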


For $\rho\in\cD_r$, the Jacquet--Langlands transfer is therefore of the form
\[
 \JL(\rho)=\St_2(\sigma_\rho)
\]
for a unitary supercuspidal representation $\sigma_\rho$ of $\GL_r(F)$.
Equivalently, \eqref{eq:dist-cusp-param} holds. In particular,
 $s(\rho)=2,$
and $r$ is odd.

\begin{definition}[Elementary tempered blocks]\label{def:block}
Let $\rho\in\cD_r$. Put
\[
 \cB_0(\rho)=\rho.
\]
For $m\geq1$, define
\begin{equation}\label{eq:block}
 \cB_m(\rho)
 :=
 \delta_{m+1}(\rho)\times\delta_m(\rho).
\end{equation}
Thus $\cB_m(\rho)$ is a representation of
$ G_{(2m+1)r}.$

\end{definition}

\begin{lemma}\label{lem:block-temp}
For every $\rho\in\cD_r$ and $m\geq1$, the representation
$\cB_m(\rho)$ is irreducible and tempered.
\end{lemma}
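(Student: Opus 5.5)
The lemma is a direct application of the general structure of tempered representations recalled in \eqref{eq:temp-decomp}. The plan has three short steps.

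\textbf{Step 1: the factors are unitary discrete series.} Since $\rho$ is unitary supercuspidal, each $\delta_m(\rho)$, for $m\ge1$, is by construction the unitary discrete-series representation attached to the centered segment $\left[-\frac{m-1}{2},\frac{m-1}{2}\right]_\rho$. In particular it is irreducible, square-integrable modulo centre, and has unitary central character. Hence $\delta_{m+1}(\rho)$ is a unitary discrete-series representation of $G_{(m+1)r}$, and $\delta_m(\rho)$ is one of $G_{mr}$.

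\textbf{Step 2: temperedness.} Normalized parabolic induction of a tempered (in particular, unitary discrete-series) representation of a Levi subgroup produces a representation all of whose constituents are tempered. The product $\delta_{m+1}(\rho)\times\delta_m(\rho)$ is exactly such an induction from the Levi $G_{(m+1)r}\times G_{mr}$. Moreover the rank count $(m+1)r+mr=(2m+1)r$ matches Definition~\ref{def:block}.

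\textbf{Step 3: irreducibility.} I would invoke the irreducibility statement recorded after \eqref{eq:temp-decomp}: for inner forms of general linear groups, a product of unitary discrete-series representations is irreducible \cite{Tadic1990,Tadic2015,LapidMinguez2016}. This is the one genuinely nontrivial input.

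A direct check is also available. The segments $[-\tfrac m2,\tfrac m2]_\rho$ and $[-\tfrac{m-1}{2},\tfrac{m-1}{2}]_\rho$ are centered at the same point, so one is contained in the other and they are not linked. The Zelevinsky–Tadić irreducibility criterion for products of segment representations on the $\rho$-line, valid for $G_n$ with $\nu_\rho$ in place of $\nu$, then gives irreducibility. Alternatively, irreducibility follows from unitarity of the induced representation combined with the fact that the intertwining operator is holomorphic and nonvanishing at the unitary point.

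The only potential obstacle is making sure the irreducibility criterion is applied with the correct reducibility twist $\nu_\rho=\nu^{2}$, since $s(\rho)=2$ for $\rho\in\cD_r$. Non-linkedness is insensitive to this, however: centered nested segments are never linked. So the argument goes through regardless of $s(\rho)$.
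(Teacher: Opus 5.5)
Your proposal is correct and follows essentially the same route as the paper. Temperedness holds because $\cB_m(\rho)$ is a normalized induction of unitary discrete series (the paper cites \cite[Corollary 6.3]{Tadic2015}), and irreducibility holds because the two centered segments are nested and hence not linked, which is exactly your ``direct check'' in Step 3. Your primary appeal to the general irreducibility of products of unitary discrete series for inner forms of $\GL$ is equally valid; it is the fact the paper records after \eqref{eq:temp-decomp}.
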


\begin{proof}
Both factors in \eqref{eq:block} are unitary discrete-series
representations. Their defining centered segments are nested, hence are not
linked. The irreducibility criterion for products of segment
representations therefore gives the irreducibility of
$\delta_{m+1}(\rho)\times\delta_m(\rho)$. Since it is a normalized
parabolic induction of unitary discrete-series representations, it is
tempered, see \cite[Corollary 6.3]{Tadic2015}.
\end{proof}

The blocks $\cB_m(\rho)$ should be viewed as the tempered replacement for a
single non-supercuspidal discrete-series representation. Theorem
\ref{thm:ds} says that $\delta_m(\rho)$ cannot be distinguished for $m>1$;
the conjecture below predicts that two consecutive lengths can nevertheless
survive together.

\section{Prasad's revised conjecture for tempered symplectic distinction}\label{sec:conj}

The unitary conjecture formulated by D.~Prasad and recorded in
\cite[Conjecture~7.1(2)]{Verma2018} is not correct as stated there. In
recent email correspondence Prasad has revised the assertion and proposed a
classification for distinguished tempered representations
\cite{dprasad}. We use the name Prasad's conjecture for distinguished
tempered representations of $\GL_n(D)$ with symplectic period for this
revised assertion. Its block form is the following.

\begin{conjecture}[Prasad's revised tempered distinction conjecture: block form]
\label{conj:block}
Let $\pi\in\Irr(G_n)$ be tempered. Then
$
 \Hom_{H_n}(\pi,\C)\neq0
$
if and only if
\[
 \pi
 \simeq
 \cB_{m_1}(\rho_1)
 \times\cdots\times
 \cB_{m_t}(\rho_t),
\]
where $\rho_i\in\cD$ and $m_i\geq0$ for every $i$.
\end{conjecture}

Thus every connected piece is either a distinguished supercuspidal singleton
or an adjacent pair of discrete-series representations:
\[
\rho
\qquad\text{or}\qquad
\delta_{m+1}(\rho)\times\delta_m(\rho).
\]
\begin{remark}
Section~\ref{sec:Bm-general} proves the ``if'' direction of
Conjecture~\ref{conj:block} in arbitrary rank: every elementary block
$\cB_m(\rho)$ is distinguished, and arbitrary products of such blocks are
distinguished by the hereditary property. The substantive remaining
content is the converse assertion that no other tempered connected
component can occur.
\end{remark}

\subsection{Adjacent-length formulation}

We now explain Conjecture~\ref{conj:block}
in terms of the lengths of the discrete-series factors.

Let $\pi$ be tempered and group its discrete-series factors according to
supercuspidal lines. Fix a unitary supercuspidal representation $\rho$.
The part of $\pi$ supported on the $\rho$-line can be written uniquely,
up to permutation, as
\[
 \pi_\rho
 =
 \delta_1(\rho)^{\times c_1(\rho)}
 \times
 \delta_2(\rho)^{\times c_2(\rho)}
 \times
 \delta_3(\rho)^{\times c_3(\rho)}
 \times\cdots ,\]
where $c_j(\rho)\geq0$ and only finitely many $c_j(\rho)$ are nonzero.

We record these factors by their lengths and associate to $\pi_\rho$ the
finite multiset
\[
 \mathcal L_\rho(\pi)
 =
 \{\underbrace{1,\ldots,1}_{c_1(\rho)},
 \underbrace{2,\ldots,2}_{c_2(\rho)},
 \underbrace{3,\ldots,3}_{c_3(\rho)},\ldots\}.
\]
For example,
$\delta_3(\rho)\times\delta_2(\rho)\times\rho\times\rho
$
corresponds to the length multiset $ \{3,2,1,1\}.$

The conjecture says that, on a distinguished supercuspidal line, these
lengths must be assembled in only two ways:
\begin{equation}\label{eq:allowed-length-pieces}
	\{1\}
	\qquad\text{or}\qquad
	\{m,m+1\},\qquad m\geq1.
\end{equation}
The singleton $\{1\}$ corresponds to the distinguished supercuspidal factor
$\rho$, while the adjacent pair $\{m,m+1\}$ corresponds to the elementary
block
\[
 \cB_m(\rho)
 =
 \delta_{m+1}(\rho)\times\delta_m(\rho).
\]

%

Thus, for instance,
$ \{1,1,2,3,4,5\}
 =
 \{1\}\sqcup\{1\}\sqcup\{2,3\}\sqcup\{4,5\}$
is allowed. It corresponds to
$ \rho\times\rho
 \times\cB_2(\rho)\times\cB_4(\rho).
$
On the other hand,
$\{2,4\},
 \{1,3\},
\{2,2,3\}
$
are not allowed, since their occurrences cannot be exhausted by the pieces
listed in \eqref{eq:allowed-length-pieces}.

The adjacent-length description should be understood for each individual occurrence, and not only in terms of the lengths which appear.
For example, the multiset
$ \{2,2,3,3\}$
is allowed, since it is
$ \{2,3\}\sqcup\{2,3\},
$
whereas
 $\{2,2,3\}$
is not. 

\subsection{Parameter formulation}

We use the local Langlands correspondence for inner forms of general linear groups, normalized compatibly with the local Jacquet--Langlands correspondence; see  \cite{ABPS2016,MSSS2026}.
Let $S_a=\Sym^{a-1}(\C^2)$
denote the $a$-dimensional irreducible representation of
$\SL_2(\C)$. For $\rho\in\cD_r$, write its Langlands parameter as
\begin{equation}\label{eq:rho-param}
 \ph_\rho=\tau_\rho\otimes S_2,
\end{equation}
where $\tau_\rho$ is an irreducible $r$-dimensional representation of
$W_F$. The parameter of $\delta_m(\rho)$ is
\begin{equation}\label{eq:delta-param}
 \ph_{\delta_m(\rho)}
 =
 \tau_\rho\otimes S_{2m}.
\end{equation}
Consequently,
\begin{align*}
 \ph_{\cB_m(\rho)}
 &=
 \tau_\rho\otimes
 \left(S_{2m+2}\oplus S_{2m}\right) \notag\\
 &=
 \tau_\rho\otimes
 \left(S_{2m+1}\otimes S_2\right),
 \qquad m\geq1, \label{eq:block-param}
\end{align*}
where the last identity is the Clebsch--Gordan formula.

\begin{conjecture}[Tempered distinction: parameter form]\label{conj:param}
Let $\pi$ be an irreducible tempered representation of $G_n$. Then $\pi$
is $H_n$-distinguished if and only if its parameter is a direct sum of
pieces of the following two types:
\begin{align*}
 &\tau\otimes S_2,\\
 &\tau\otimes\left(S_{2m}\oplus S_{2m+2}\right),
 \qquad m\geq1,
\end{align*}
where in each case $\tau\otimes S_2$ is the parameter of a distinguished
supercuspidal representation of some $G_r$.
\end{conjecture}

Thus, for each fixed irreducible Weil representation $\tau$, the even
$\SL_2$-Jordan blocks occurring in a distinguished tempered parameter can be
partitioned into
$S_2
 \qquad\text{and}\qquad
 S_{2m}\oplus S_{2m+2}.
$
Equivalently, the corresponding even integers can be partitioned into
$
 \{2\}
 \qquad\text{and}\qquad
 \{2m,2m+2\}.
$

\begin{proposition}
Conjectures~\ref{conj:block} and \ref{conj:param} are equivalent.
\end{proposition}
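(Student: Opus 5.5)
The plan is to translate both conjectures through the local Langlands correspondence, using the uniqueness of the tempered decomposition \eqref{eq:temp-decomp} and the parameter formulas \eqref{eq:rho-param}, \eqref{eq:delta-param}. The correspondence for $G_n$ sends a tempered $\pi=\delta_1\times\cdots\times\delta_t$ to $\ph_\pi=\ph_{\delta_1}\oplus\cdots\oplus\ph_{\delta_t}$. It is a bijection between tempered irreducible representations and bounded parameters that are relevant for $G_n$. Hence both conjectures are statements about the same set of tempered $\pi$, phrased once in terms of discrete-series factors and once in terms of Jordan blocks. It therefore suffices to show that, for a fixed tempered $\pi$, the condition ``$\pi$ is a product of blocks $\cB_{m}(\rho)$ with $\rho\in\cD$'' holds if and only if ``$\ph_\pi$ is a sum of pieces of the two listed types''.

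\emph{Block form implies parameter form.} Suppose $\pi\simeq\prod_i\cB_{m_i}(\rho_i)$. Additivity of parameters under products gives $\ph_\pi=\bigoplus_i\ph_{\cB_{m_i}(\rho_i)}$. If $m_i=0$, the summand is $\ph_{\rho_i}=\tau_{\rho_i}\otimes S_2$. If $m_i\ge1$, the computation following \eqref{eq:delta-param} gives $\tau_{\rho_i}\otimes(S_{2m_i}\oplus S_{2m_i+2})$. In both cases $\tau_{\rho_i}\otimes S_2$ is the parameter of the distinguished supercuspidal $\rho_i$. So $\ph_\pi$ has the required shape.

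\emph{Parameter form implies block form.} Suppose $\ph_\pi$ is such a sum. For each piece, let $\rho$ be the distinguished supercuspidal of $G_r$ with $\ph_\rho=\tau\otimes S_2$. This $\rho$ is unique by injectivity of the correspondence, and $\rho\in\cD_r$. By \eqref{eq:delta-param}, the piece $\tau\otimes S_{2m}$ is the parameter of $\delta_m(\rho)$. Hence the piece $\tau\otimes S_2$ is $\ph_{\cB_0(\rho)}$, and $\tau\otimes(S_{2m}\oplus S_{2m+2})$ is $\ph_{\cB_m(\rho)}$. Let $\pi'$ be the product of the corresponding blocks. By Lemma~\ref{lem:block-temp} and irreducibility of tempered products, $\pi'$ is irreducible and tempered, and $\ph_{\pi'}=\ph_\pi$. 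Injectivity then gives $\pi\simeq\pi'$.

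The main obstacle is justifying \eqref{eq:delta-param}. This requires the normalization of the correspondence for inner forms to be compatible with Jacquet--Langlands: $\JL(\delta_m(\rho))=\St_m(\JL(\rho))$ in the appropriate sense, with $\JL(\rho)=\St_2(\sigma_\rho)$ and $s(\rho)=2$ by Theorem~\ref{thm:ds}. Together these give the parameter $\tau_\rho\otimes S_{2m}$ rather than $\tau_\rho\otimes S_m$. I would quote this from \cite{ABPS2016,MSSS2026} (Deligne--Kazhdan--Vign\'eras and Badulescu). With that in place, the rest of the argument is bookkeeping.
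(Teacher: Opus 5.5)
Your argument is correct and is essentially the paper's proof, which simply invokes \eqref{eq:rho-param}, \eqref{eq:delta-param}, additivity of parameters under tempered induction and the Clebsch--Gordan identity. You spell out both directions and make explicit the injectivity of the correspondence needed to pass from $\ph_{\pi'}=\ph_\pi$ to $\pi\simeq\pi'$. The one point you leave tacit is that the supercuspidal $\rho$ with $\ph_\rho=\tau\otimes S_2$ is unitary, and so lies in $\cD_r$; this follows because $\tau\otimes S_2$ is a summand of the bounded parameter $\ph_\pi$.
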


\begin{proof}
This follows from \eqref{eq:rho-param}, \eqref{eq:delta-param}, additivity of
Langlands parameters under tempered parabolic induction, and
\[
 S_{2m+1}\otimes S_2
 \simeq
 S_{2m+2}\oplus S_{2m}.
\]
\end{proof}

\begin{remark}[Parameter interpretation]\label{rem:adj}
For a distinguished supercuspidal $\rho$, the reducibility step is
$s(\rho)=2$. Thus the cuspidal support of $\delta_m(\rho)$, in
$\nu$-exponents, is
$
 \{1-m,3-m,\ldots,m-3,m-1\}.
$
The two supports
$
 \operatorname{Supp}(\delta_{m+1}(\rho))
 \quad\text{and}\quad
 \operatorname{Supp}(\delta_m(\rho))
$
are the two parity classes in the consecutive interval
$
 \{-m,-m+1,\ldots,m-1,m\}.
$
This is the local combinatorial reason for the occurrence of adjacent
lengths.
\end{remark}

\section{Existence of the elementary blocks $\cB_m(\rho)$}
\label{sec:Bm-general}

In this section we prove the existence direction of
Conjecture~\ref{conj:block}. Namely, every elementary block
\[
   \cB_m(\rho)
   =
   \delta_{m+1}(\rho)\times\delta_m(\rho)
\]
attached to a distinguished supercuspidal representation $\rho$ has a
symplectic model. The proof is local and uses the closed orbit in the
geometric-lemma filtration for a maximal parabolic subgroup.

Throughout this section let
$\rho\in\cD_d$
be a unitary $H_d$-distinguished supercuspidal representation of $G_d$.
By Theorem~\ref{thm:ds},
\begin{equation}\label{eq:rho-s-two-section5}
   s(\rho)=2,
   \qquad
   \nu_\rho=\nu^2.
\end{equation}
Fix $m\geq1$ and put
$
   k=md,
   \qquad
   n=(2m+1)d.
$

Then
$ n-k=(m+1)d,
   \qquad
   n-2k=d$
   and 
the block $\cB_m(\rho)$ is a representation of $G_n$.

\subsection{The relevant maximal parabolic and its closed orbit}

Let $
   P=P_{k,n-k}=MU
   =
   P_{md,(m+1)d},
$ be the maximal parbaolic subgroup of $G_n$
with Levi subgroup
$
   M\simeq G_k\times G_{n-k}
   =
   G_{md}\times G_{(m+1)d}.
$
We consider
\begin{equation}\label{eq:Bm-induced}
   \Pi_m
   =
   \Ind_{P}^{G_n}
   \left(
      \delta_m(\rho)\otimes\delta_{m+1}(\rho)
   \right).
\end{equation}
The two centered segments
$
   \left[-\frac{m-1}{2},\frac{m-1}{2}\right]_\rho
   \quad\text{and}\quad
   \left[-\frac m2,\frac m2\right]_\rho
$
are nested. Hence they are not linked, and the standard irreducibility
criterion for products of essentially square-integrable representations gives
\begin{equation}\label{eq:Bm-irred}
   \Pi_m
   \simeq
   \delta_m(\rho)\times\delta_{m+1}(\rho)
   \simeq
   \delta_{m+1}(\rho)\times\delta_m(\rho)
   =
   \cB_m(\rho).
\end{equation}
See \cite{Tadic1990,LapidMinguez2014}.

For this parabolic the orbit geometry is as follows. The quotient
$ P_{k,n-k}\backslash G_n
$
may be identified with the Grassmannian of $k$-dimensional right
$D$-subspaces of the underlying $n$-dimensional Hermitian space. The
$H_n$-orbits are indexed by
\[
   O_r
   =
   \left\{
      W:\dim_D W=k,\
      \dim_D\operatorname{Rad}(J|_W)=r
   \right\},
   \qquad
   0\leq r\leq k.
\]
This is the maximal-parabolic orbit decomposition used in
\cite[\S3.4]{SharmaVerma2026}; it is a special case of the general symmetric
space framework in \cite{Offen2017}.

The orbit $O_k$ consists of totally isotropic $k$-planes. Since the
condition
$
   J|_W=0
$
is closed on the Grassmannian, $O_k$ is the closed orbit. Its stabilizer is
a parabolic subgroup
$ H_{k,k}=M_{k,k}U_{k,k}
$
of $H_n$ whose Levi factor is
\[
   M_{k,k}
   \simeq
   \Delta(G_k\times G_k)\times H_{n-2k}
   =
   \Delta(G_{md}\times G_{md})\times H_d.
\]
Thus the closed orbit leaves a single fixed symplectic factor of rank $d$,
while the two factors of rank $md$ are exchanged.

\subsection{Modulus characters and the closed-orbit Hom-space}

We fix the normalization in the geometric lemma. Let
$\delta_P$ denote the modulus character of $P=P_{k,n-k}$. For
 $ \operatorname{diag}(g_1,g_2)\in G_k\times G_{n-k}$
we have
\[
   \delta_P
   \bigl(\operatorname{diag}(g_1,g_2)\bigr)
   =
   \nu(g_1)^{2(n-k)}
   \nu(g_2)^{-2k}.
\]

Let $\delta_{k,r}$ be the modulus character of the stabilizer
$H_{k,r}$. If
\[
   m_r
   =
   \operatorname{diag}
   (g,h_1,h_2,g^\ast)
   \in M_{k,r},
\]
with
$ g\in G_r,\qquad
   h_1\in H_{k-r},\qquad
   h_2\in H_{n-k-r},
$
the calculation in \cite[Proposition~3.1]{SharmaVerma2026}, compared with
the general orbit calculation in \cite{Offen2017}, gives
\[
   \delta_{k,r}^{-1}(m_r)
   \delta_P^{1/2}(m_r)
   =
   \nu(g)^{-(n-2r+1)}.\]
In the closed-orbit case $r=k$ this becomes
\begin{equation}\label{eq:stabilizer-mod-closed}
   \delta_{k,k}^{-1}(m_k)
   \delta_P^{1/2}(m_k)
   =
   \nu(g)^{-(n-2k+1)}
   =
   \nu(g)^{-(d+1)}.
\end{equation}

After passing to normalized Jacquet functors, only a single power of
$\nu$ remains. For the closed orbit the first inducing factor
$\delta_m(\rho)$ is not Jacquet-refined, whereas the second inducing factor
$\delta_{m+1}(\rho)$ is refined with respect to the partition
$(n-2k,k)=(d,md).$

Let $N$ denote the unipotent radical of the corresponding parabolic
$P_{d,md}$ in $G_{(m+1)d}$. On the $G_k$-variable $g$ embedded in the
second block as $g^\ast$, the square root of the Jacquet modulus contributes
$
   \delta_N^{1/2}=\nu(g)^d.
$
Combining this with \eqref{eq:stabilizer-mod-closed} yields
\[
   \nu(g)^{-(d+1)}\nu(g)^d
   =
   \nu(g)^{-1}.
\]
Moving this character to the target of the Hom-space gives $\nu(g)$,
which is the single $\nu$-twist in the good-orbit criterion.

Corollary~4.1 of \cite{SharmaVerma2026}, based on
\cite{BernsteinZelevinsky1977,Offen2017}, gives
\begin{equation}\label{eq:closed-Hom-general}
\begin{split}
   &\Hom_{H_n}
   \left(
      \text{closed-orbit quotient of }\Pi_m|_{H_n},
      \C
   \right)\\
   &\hspace{1cm}\simeq
   \Hom_{M_{k,k}}
   \left(
      \delta_m(\rho)
      \otimes
      r_{(d,md),((m+1)d)}
      \bigl(\delta_{m+1}(\rho)\bigr),
      \nu
   \right).
\end{split}
\end{equation}
On
$ M_{k,k}
   \simeq
   \Delta(G_k\times G_k)\times H_d,
$
the character $\nu$ in \eqref{eq:closed-Hom-general} is the character
\[
   (g,h)\longmapsto \nu(g).
\]

We use the following specialization of Proposition~4.2 of
\cite{SharmaVerma2026}, with the orbit input from \cite{Offen2017}.

\begin{lemma}[Closed-orbit pairing criterion]\label{lem:closed-general}
Suppose that
$ r_{(d,k),(d+k)}(\lambda_2)
$
contains an irreducible constituent
$
   \eta_0\otimes\eta_2,
   \qquad
   \eta_0\in\Irr(G_d),\quad
   \eta_2\in\Irr(G_k).
$
Then the constituent
$
   \lambda_1\otimes\eta_0\otimes\eta_2
$
contributes to the closed-orbit Hom-space in
\eqref{eq:closed-Hom-general} provided
\begin{equation}\label{eq:closed-two-conditions}
   \lambda_1\simeq\nu\eta_2
   \qquad\text{and}\qquad
   \Hom_{H_d}(\eta_0,\C)\neq0.
\end{equation}
\end{lemma}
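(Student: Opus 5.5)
The statement is a direct specialisation of the closed-orbit identification \eqref{eq:closed-Hom-general}, so I would prove it by writing down an explicit nonzero $M_{k,k}$-equivariant functional. Throughout, $\lambda_1$ and $\lambda_2$ play the roles of $\delta_m(\rho)$ and $\delta_{m+1}(\rho)$.

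\emph{Step 1: identify the target Hom-space.} The Levi $M_{k,k}\simeq\Delta(G_k\times G_k)\times H_d$ acts on the space of $\lambda_1\otimes r_{(d,k),(d+k)}(\lambda_2)$ as follows. The element $g\in G_k$ acts diagonally on the $\lambda_1$ factor and on the $G_k$-factor of the Jacquet module, through the embedding $g\mapsto g^\ast$; this is the twisted diagonal coming from the stabiliser of the isotropic $k$-plane. The element $h\in H_d$ acts on the $G_d$-factor. Since Jacquet functors are exact, the constituent $\eta_0\otimes\eta_2$ is a subquotient of $r_{(d,k),(d+k)}(\lambda_2)$. I would first treat the case where it is a quotient, and then use a semisimplification or generic-position argument as in \cite[Proposition~4.2]{SharmaVerma2026}; that is exactly what ``contributes'' means in the paper's usage.

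\emph{Step 2: build the functional.} The functional is a tensor product.
\begin{itemize}
\item On the $H_d$-part, take $0\neq\ell_0\in\Hom_{H_d}(\eta_0,\C)$, which exists by hypothesis.
\item On the $\Delta G_k$-part, I need a nonzero element of $\Hom_{G_k}(\lambda_1\otimes\eta_2^{\ast\text{-twisted}},\nu)$. After the identification $g\mapsto g^\ast$ (inverse transpose composed with the canonical involution, which turns a representation into its contragredient up to the Gelfand--Kazhdan type isomorphism for $G_k$), this Hom-space is $\Hom_{G_k}(\lambda_1\otimes\widetilde{\nu\eta_2},\C)$. It is one-dimensional precisely when $\lambda_1\simeq\nu\eta_2$, and is then spanned by the canonical pairing.
\end{itemize}
The product of the two functionals is nonzero, and it is $M_{k,k}$-equivariant for the character $(g,h)\mapsto\nu(g)$. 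That character is the one produced by the modulus computation preceding \eqref{eq:closed-Hom-general}. So conditions \eqref{eq:closed-two-conditions} give a nonzero element of the right-hand side.

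\emph{Step 3: the main obstacle.} The main difficulty is the bookkeeping of the twisted diagonal, namely checking that the $g^\ast$-action converts $\eta_2$ into its contragredient, so that the condition comes out as an isomorphism $\lambda_1\simeq\nu\eta_2$ rather than a contragredient relation. This is where I would lean on the explicit orbit representative of \cite[\S3]{SharmaVerma2026} and \cite[Lemma~3.2]{Offen2017}, and on the identity $\widetilde{\pi}\simeq\pi\circ\theta$ for $\theta(g)={}^t\overline{g}^{-1}$ on $G_k$.

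The passage from a subquotient to a quotient is not automatic. For the application to $\delta_{m+1}(\rho)$ it is harmless, however: by Lemma~\ref{lem:two-block-jacquet} the Jacquet module is irreducible, so I would state the lemma in that form when applying it. The general phrasing is justified by \cite[Proposition~4.2]{SharmaVerma2026}.
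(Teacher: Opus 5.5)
Your proposal is correct and is essentially the paper's argument. The paper simply invokes Proposition~4.2 of \cite{SharmaVerma2026} for the closed-orbit involution (swap the two $G_k$-blocks, fix the $G_d$-block). Your construction supplies exactly what that citation provides in this special case: you tensor a nonzero $\ell_0\in\Hom_{H_d}(\eta_0,\C)$ with the canonical pairing on $\lambda_1\otimes(\eta_2\circ(g\mapsto g^\ast))\simeq\lambda_1\otimes\widetilde{\eta_2}$.

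Your caveat is also right, and it is sharper than the paper's phrasing. Read as a sufficient condition, the lemma needs $\eta_0\otimes\eta_2$ to be a quotient (for instance a direct summand) of $r_{(d,k),(d+k)}(\lambda_2)$. This is automatic in the only application, because by \eqref{eq:jacquet-Bm-master} that Jacquet module is irreducible.
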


\begin{proof}
For the closed orbit the involution on the three refined Levi blocks exchanges
the two $G_k$-blocks and fixes the middle $G_d$-block. Proposition~4.2 of \cite{SharmaVerma2026}, with the orbit description of
\cite{Offen2017}, gives the two conditions in
\eqref{eq:closed-two-conditions}.
\end{proof}

\subsection{The Jacquet module of $\delta_{m+1}(\rho)$}

The required Jacquet module is computed as follows.

Recall that
 $ \delta_{m+1}(\rho)
   =
   Q\left(
      \left[-\frac m2,\frac m2\right]_\rho
   \right).
$
The Jacquet module with respect to the partition
$
   (d,md)
$
cuts this length-$(m+1)$ segment into a terminal piece of length one and a
remaining piece of length $m$. Thus
\[
   \left[-\frac m2,\frac m2\right]_\rho
   =
   \left[\frac m2,\frac m2\right]_\rho
   \sqcup
   \left[-\frac m2,\frac m2-1\right]_\rho
\]
in the ordered sense of the Jacquet-module formula. Consequently,
\begin{equation}\label{eq:jacquet-first-general}
\begin{split}
   r_{(d,md),((m+1)d)}
   \bigl(\delta_{m+1}(\rho)\bigr)
   &=
   Q\left(
      \left[\frac m2,\frac m2\right]_\rho
   \right)\\
   &\qquad\otimes
   Q\left(
      \left[-\frac m2,\frac m2-1\right]_\rho
   \right).
\end{split}
\end{equation}

The two factors can be written more explicitly. By \eqref{eq:rho-s-two-section5},
 $ \nu_\rho=\nu^2,$
hence
\begin{equation}\label{eq:jacquet-fixed-general}
   Q\left(
      \left[\frac m2,\frac m2\right]_\rho
   \right)
   =
   \nu_\rho^{m/2}\rho
   =
   \nu^m\rho.
\end{equation}
For the second factor,
\[
   \left[-\frac m2,\frac m2-1\right]_\rho
   =
   \nu_\rho^{-1/2}
   \left[
      -\frac{m-1}{2},
      \frac{m-1}{2}
   \right]_\rho.
\]
Therefore
\begin{equation}\label{eq:jacquet-paired-general}
\begin{split}
   Q\left(
      \left[-\frac m2,\frac m2-1\right]_\rho
   \right)
   &=
   \nu_\rho^{-1/2}
   Q\left(
      \left[
         -\frac{m-1}{2},
         \frac{m-1}{2}
      \right]_\rho
   \right)\\
   &=
   \nu^{-1}\delta_m(\rho).
\end{split}
\end{equation}
Combining \eqref{eq:jacquet-first-general},
\eqref{eq:jacquet-fixed-general}, and
\eqref{eq:jacquet-paired-general}, we obtain
\begin{equation}\label{eq:jacquet-Bm-master}
 r_{(d,md),((m+1)d)}
   \bigl(\delta_{m+1}(\rho)\bigr)
   =
   \nu^m\rho
   \otimes
   \nu^{-1}\delta_m(\rho).
\end{equation}

At the level of $\nu$-exponents this identity is especially transparent.
The support of $\delta_{m+1}(\rho)$ is
  $ \{-m,-m+2,\ldots,m-2,m\}.$

Removing the terminal exponent $m$ leaves
\[
   \{-m,-m+2,\ldots,m-2\}
   =
   -1+
   \{-(m-1),-(m-3),\ldots,m-1\},
\]
which is the support of
$\nu^{-1}\delta_m(\rho)$.

\subsection{Existence of the symplectic period}

The preceding calculation gives the general existence result.

\begin{theorem}[Existence of the elementary blocks]
\label{thm:Bm-general}
Let $\rho\in\cD_d$ be a unitary distinguished supercuspidal representation of
$G_d$, and let $m\geq1$. Then
\[
   \cB_m(\rho)
   =
   \delta_{m+1}(\rho)\times\delta_m(\rho)
\]
admits a symplectic model. Equivalently,
\begin{equation}\label{eq:Bm-distinguished-general}
   \Hom_{H_{(2m+1)d}}
   \left(
      \cB_m(\rho),\C
   \right)
   \neq0.
\end{equation}
\end{theorem}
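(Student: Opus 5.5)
We realize $\cB_m(\rho)$ as the induced representation $\Pi_m$ of \eqref{eq:Bm-induced}, which is legitimate by \eqref{eq:Bm-irred}. We then produce a nonzero $H_n$-invariant functional on $\Pi_m$ from the closed orbit $O_k$ of $P=P_{md,(m+1)d}$ acting on $X_n$, with $k=md$ and $n=(2m+1)d$.

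\emph{Step 1: reduce to the closed-orbit quotient.} In the filtration \eqref{eq:mackey-good-filtration} the open orbits come first, so the closed orbit $O_k$ appears as the last graded piece. That piece is therefore a quotient of $\Pi_m|_{H_n}$, namely restriction of functions to $PO_k$. By Remark~\ref{rem:good-not-sufficient}, any nonzero $H_n$-invariant functional on it pulls back to a nonzero element of $\Hom_{H_n}(\Pi_m,\C)$. It therefore suffices to show that the right-hand side of \eqref{eq:closed-Hom-general} is nonzero.

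\emph{Step 2: verify the pairing conditions.} We apply Lemma~\ref{lem:closed-general} with $\lambda_1=\delta_m(\rho)$ and $\lambda_2=\delta_{m+1}(\rho)$. By \eqref{eq:jacquet-Bm-master}, the Jacquet module $r_{(d,md),((m+1)d)}(\delta_{m+1}(\rho))$ is the single irreducible representation $\eta_0\otimes\eta_2$, where
\[
\eta_0=\nu^m\rho,\qquad \eta_2=\nu^{-1}\delta_m(\rho).
\]
The first condition of \eqref{eq:closed-two-conditions} holds because $\nu\eta_2=\delta_m(\rho)=\lambda_1$. The second holds because $\rho$ is $H_d$-distinguished, and \eqref{eq:twist-dist} then shows that $\nu^m\rho$ is $H_d$-distinguished as well.

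\emph{Step 3: produce the functional.} Let $\ell_0\neq0$ be an element of $\Hom_{H_d}(\nu^m\rho,\C)$. On $\delta_m(\rho)\otimes\nu^m\rho\otimes\nu^{-1}\delta_m(\rho)$ we define
\[
v\otimes w\otimes u\mapsto \ell_0(w)\,\langle v,u\rangle,
\]
where $\langle\ ,\ \rangle$ is the natural pairing realizing $\delta_m(\rho)\simeq\nu\cdot\nu^{-1}\delta_m(\rho)$ diagonally. This functional transforms under $\Delta(G_k\times G_k)\times H_d$ by the character $(g,h)\mapsto\nu(g)$. Since the Jacquet module is irreducible, no extension issue arises from composition factors (the irreducible case of Proposition~\ref{prop:good-orbit-general}), and the Hom-space is nonzero.

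The main obstacle is the normalization in Step 3. The diagonal $G_k$ acts on the two exchanged blocks through $g$ and $g^\ast$, and under $g\mapsto g^\ast$ the second factor becomes a twisted contragredient. So $\langle\ ,\ \rangle$ must really be a pairing between $\lambda_1$ and the appropriate $\theta$-twist of $\nu\eta_2$. The character itself is already fixed by the modulus computation ending in \eqref{eq:stabilizer-mod-closed}. What remains is to check that the twist is harmless, which follows because $\delta_m(\rho)^\vee$ composed with the involution is again $\delta_m(\rho)$. This is exactly the condition packaged in Proposition~4.2 of \cite{SharmaVerma2026}, and the plan is to invoke it there.
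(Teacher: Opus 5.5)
Your proposal is correct and follows the paper's own route. You realize $\cB_m(\rho)$ as $\Pi_m$, compute $r_{(d,md),((m+1)d)}(\delta_{m+1}(\rho))=\nu^m\rho\otimes\nu^{-1}\delta_m(\rho)$, check the two closed-orbit conditions of Lemma~\ref{lem:closed-general}, and pull the functional back from the final (closed-orbit) quotient of the filtration; your explicit Step~3, with the third factor read through $g\mapsto g^\ast$ so that the pairing is the canonical one between $\delta_m(\rho)$ and its contragredient, just spells out what the paper packages into Proposition~4.2 of \cite{SharmaVerma2026}.
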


\begin{proof}
Consider the irreducible induced representation $\Pi_m$ in
\eqref{eq:Bm-induced}. By \eqref{eq:Bm-irred}, it is isomorphic to
$\cB_m(\rho)$.
We use the closed orbit $O_k$ in
\[
   P_{md,(m+1)d}\backslash G_{(2m+1)d}/H_{(2m+1)d}.
\]
By \eqref{eq:jacquet-Bm-master}, the relevant Jacquet-module constituent of
the second inducing factor is
$ \nu^m\rho
   \otimes
   \nu^{-1}\delta_m(\rho).
$
Thus the three refined factors attached to the closed orbit are
$
   \delta_m(\rho)
   \otimes
   \nu^m\rho
   \otimes
   \nu^{-1}\delta_m(\rho).
$
The two exchanged $G_{md}$-factors satisfy
$  \delta_m(\rho)
   =
   \nu\bigl(\nu^{-1}\delta_m(\rho)\bigr).$
Moreover,
$
   \Hom_{H_d}(\nu^m\rho,\C)
   =
   \Hom_{H_d}(\rho,\C)
   \neq0,
$
because $\nu$ is trivial on $H_d$. Therefore
Lemma~\ref{lem:closed-general} implies that the closed-orbit quotient has a
nonzero $H_n$-invariant linear form:
\[
   \Hom_{H_n}
   \left(
      \text{closed-orbit quotient of }\Pi_m|_{H_n},
      \C
   \right)
   \neq0.
\]

Since $O_k$ is closed, its term is the final quotient in the geometric-lemma
filtration. Hence there is a canonical $H_n$-equivariant surjection
\[
   \Pi_m|_{H_n}
   \twoheadrightarrow
   \text{closed-orbit quotient}.
\]
Pulling back a nonzero invariant form through this surjection gives a
nonzero element of
$\Hom_{H_n}(\Pi_m,\C).
$
Finally $\Pi_m\simeq\cB_m(\rho)$, so
\eqref{eq:Bm-distinguished-general} follows.
\end{proof}

\begin{remark}[The orbit involution]
The refined Levi for the closed orbit has three blocks of ranks
$md,\quad d,\quad md.
$
The corresponding involution is
$
   1\longleftrightarrow3,
   \qquad
   2\longmapsto2.
$
Thus the proof of Theorem~\ref{thm:Bm-general} is represented by the
$
   \delta_m(\rho)
   \quad\longleftrightarrow\quad
   \nu^{-1}\delta_m(\rho),
   \qquad
   \nu^m\rho\ \text{fixed}.
$
The $\nu$-pairing of the first two factors comes from the modulus character
of the closed orbit, whereas the fixed factor survives because the
supercuspidal representation $\rho$ is itself distinguished.
\end{remark}

\begin{remark}[Comparison with the non-distinguished supercuspidal line]
Section~5.2 of \cite{SharmaVerma2026}, using the symplectic-multiset
argument of \cite{MitraOffenSayag2017}, treats a supercuspidal line generated
by a representation which does not have a symplectic model. There a good
decomposition has no fixed points: every refined segment must occur in a
$\nu$-paired two-cycle. Theorem~\ref{thm:Bm-general} shows what changes on a distinguished
supercuspidal line. A single fixed piece,
namely $\nu^m\rho$, is now allowed, and the remainder of
$\delta_{m+1}(\rho)$ pairs with the external factor
$\delta_m(\rho)$.
\end{remark}

\begin{corollary}[The ``if'' direction of the tempered conjecture]
\label{cor:if-tempered}
Let
\[
   \pi
   =
   \cB_{m_1}(\rho_1)
   \times\cdots\times
   \cB_{m_t}(\rho_t),
\]
where $\rho_i\in\cD$ and $m_i\geq0$. Then $\pi$ has a symplectic model.
\end{corollary}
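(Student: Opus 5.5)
The plan is to reduce Corollary~\ref{cor:if-tempered} to the two results already available: distinction of each individual block, and the hereditary property.

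\emph{Step 1: each block is distinguished.} For each $i$, we show that $\cB_{m_i}(\rho_i)$ is $H_{(2m_i+1)r_i}$-distinguished, where $\rho_i\in\cD_{r_i}$.
\begin{itemize}
\item If $m_i=0$, then $\cB_0(\rho_i)=\rho_i$. This is distinguished by the definition of $\cD_{r_i}$.
\item If $m_i\ge1$, distinction is exactly the content of Theorem~\ref{thm:Bm-general}, applied with $d=r_i$ and $m=m_i$.
\end{itemize}

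\emph{Step 2: apply the hereditary property.} Set $n_i=(2m_i+1)r_i$. Each $\cB_{m_i}(\rho_i)$ is a distinguished representation of $G_{n_i}$, so by \cite[Theorem~4.1]{SharmaVerma2026} the normalized induced representation
\[
\cB_{m_1}(\rho_1)\times\cdots\times\cB_{m_t}(\rho_t)
\]
is $H_{n_1+\cdots+n_t}$-distinguished. Note that the hereditary property gives a nonzero $H$-invariant form on the full induced representation, with no irreducibility hypothesis needed. This yields the corollary.

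\emph{Remark on irreducibility.} It is worth noting, although not needed for the statement, that the product is irreducible and tempered. By Lemma~\ref{lem:block-temp}, each $\cB_{m_i}(\rho_i)$ is a product of unitary discrete series. Hence $\pi$ is a product of unitary discrete series, which is irreducible by the result recalled after \eqref{eq:temp-decomp}.

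\emph{Main obstacle.} There is no real obstacle here. The only point requiring care is that the hereditary property is applied to the induced representation $\pi$ itself, and not to a subquotient of it. Since the induced representation is irreducible, this distinction is immaterial in any case.
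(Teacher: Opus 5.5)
Your proposal is correct and is the paper's own proof: the blocks with $m_i=0$ are distinguished by the definition of $\cD$, and those with $m_i\ge1$ by Theorem~\ref{thm:Bm-general}, after which the hereditary property \cite[Theorem~4.1]{SharmaVerma2026} gives distinction of the product. Your irreducibility remark is not needed, since the hereditary property already applies to the induced representation itself.
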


\begin{proof}
If $m_i=0$, then $\cB_0(\rho_i)=\rho_i$ is distinguished by definition.
If $m_i\geq1$, then $\cB_{m_i}(\rho_i)$ is distinguished by
Theorem~\ref{thm:Bm-general}. The hereditary property of the symplectic
model, \cite[Theorem~4.1]{SharmaVerma2026}, now gives the result; see also
\cite[Proposition~7.1]{Offen2017} and
\cite[Theorem~5.4]{MatringeOffenYang2024}.
\end{proof}

\begin{remark}
The converse, proved in Section~\ref{sec:converse}, shows that every
irreducible tempered distinguished representation is a product of the
blocks above.
\end{remark}

We record the rank-five specialization of the general classification. Since a
distinguished supercuspidal representation of $G_r$ has $r$ odd, only
$r=1,3,5$ can occur when the total rank is five.

Let $\cD_1$ be the set of unitary characters of $D^\times$; equivalently,
the unitary characters factoring through the reduced norm. Every such
character is $H_1=D^1$-distinguished.

%
%

\begin{theorem}[Tempered distinction for $G_5$]
\label{thm:rank5}
Let $\pi$ be an irreducible tempered representation of $G_5$. Then
\[
 \Hom_{H_5}(\pi,\C)\neq0
\]
if and only if, up to permutation of the inducing factors, $\pi$ belongs to
one of the following families:
\begin{enumerate}[label=\textup{(\arabic*)}]
\item $\rho_5$, where $\rho_5\in\cD_5$;
\item $\rho_3\times\chi_1\times\chi_2$, where
 $\rho_3\in\cD_3$ and $\chi_1,\chi_2\in\cD_1$;
\item $\chi_1\times\chi_2\times\chi_3\times\chi_4\times\chi_5$, where
 $\chi_i\in\cD_1$;
\item $\cB_1(\chi)\times\chi_1\times\chi_2$, where
 $\chi,\chi_1,\chi_2\in\cD_1$;
\item $\cB_2(\chi)=\delta_3(\chi)\times\delta_2(\chi)$, where
 $\chi\in\cD_1$.
\end{enumerate}
\end{theorem}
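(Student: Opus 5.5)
We deduce this from Theorem~\ref{conj:tempered-classification}, taking both of its directions as given: the ``if'' direction is Corollary~\ref{cor:if-tempered}, and the converse is the rank reduction of Section~\ref{sec:converse}. The proof then reduces to a finite enumeration. The theorem says that a tempered $\pi$ of $G_5$ is $H_5$-distinguished if and only if
\[
\pi\simeq\cB_{m_1}(\rho_1)\times\cdots\times\cB_{m_k}(\rho_k),
\qquad \rho_i\in\cD_{r_i},\qquad \sum_i(2m_i+1)r_i=5 .
\]
So we only have to list the admissible data $(m_i,r_i)$ and identify the resulting representations.

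The first input is that $r_i$ must be odd. This follows from Theorem~\ref{thm:ds} and Conjecture~\ref{conj:prasad-ds} as recorded after Theorem~\ref{thm:ds}. For $\rho\in\cD_r$ the parameter is $\tau_\rho\otimes S_2$ with $\dim\tau_\rho=r$ odd. Hence each block $\cB_{m_i}(\rho_i)$ has odd rank $(2m_i+1)r_i$. The multiset of block ranks is therefore a partition of $5$ into odd parts, so it is one of
\[
\{5\},\qquad \{3,1,1\},\qquad \{1,1,1,1,1\}.
\]

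Next we factor each odd part $N$ as $(2m+1)r$ with $m\ge0$ and $r$ odd.
\begin{itemize}
\item[] For $N=5$: either $(m,r)=(0,5)$, which gives $\rho_5\in\cD_5$, or $(m,r)=(2,1)$, which gives $\cB_2(\chi)=\delta_3(\chi)\times\delta_2(\chi)$.
\item[] For $N=3$: either $(0,3)$, which gives $\rho_3$, or $(1,1)$, which gives $\cB_1(\chi)$.
\item[] For $N=1$: only $(0,1)$, which gives a character $\chi\in\cD_1$.
\end{itemize}
Combining these over the three partitions yields exactly families (1)--(5). The partition $\{5\}$ gives (1) and (5), the partition $\{3,1,1\}$ gives (2) and (4), and the partition $1^5$ gives (3). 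Any reordering of the factors is absorbed by the phrase ``up to permutation'', because products of unitary discrete series are irreducible and commutative (see \eqref{eq:temp-decomp}).

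Finally we identify $\cD_1$ explicitly. Irreducible representations of $G_1=D^\times$ are all supercuspidal, and $H_1=D^1$ is the group of reduced-norm-one elements. A unitary irreducible representation of $D^\times$ has a $D^1$-invariant vector exactly when it factors through $\Nrd$. Since $D^\times/D^1\simeq F^\times$ is abelian, such a representation is then a character. Hence $\cD_1$ is the set of unitary characters $\chi\circ\Nrd$, matching the description preceding the theorem.

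The only step with any real content is the oddness of $r$. Without it, for instance, $r=2$ paired with $r'=3$, or $r=4$ paired with $r'=1$, would give spurious families. Everything else is bookkeeping, so we would state the parity input from Theorem~\ref{thm:ds} explicitly at the start of the proof.
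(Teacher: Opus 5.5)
Your proposal is correct and follows the paper's own route. The paper likewise deduces Theorem~\ref{thm:rank5} from the general classification (Theorem~\ref{thm:main-classification}) together with the fact that $\rho\in\cD_r$ forces $r$ odd, so that every block $\cB_m(\rho)$ has odd rank $(2m+1)r$ and the only non-supercuspidal blocks in rank five are $\cB_1(\chi)$ and $\cB_2(\chi)$. Your enumeration over the odd partitions $\{5\}$, $\{3,1,1\}$, $\{1,1,1,1,1\}$ is a more explicit version of the same bookkeeping.
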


\begin{remark}
Theorem~\ref{thm:rank5} follows from the general classification, Theorem~\ref{thm:main-classification}, proved in
Section~\ref{sec:converse}. Indeed, an elementary block
$\cB_m(\rho)$ has rank $(2m+1)r$ when $\rho\in\cD_r$. In total rank five,
a non-supercuspidal block therefore has $r=1$ and is either
$\cB_1(\chi)$ or $\cB_2(\chi)$; the remaining rank is filled by
distinguished supercuspidal singletons.
\end{remark}

\section{The converse theorem: rank reduction}
\label{sec:converse}

Let
\[
   \pi=\delta_L(\rho)\times\sigma
\]
be tempered and distinguished, with $L$ maximal on the $\rho$-line. The
Mackey filtration for $P_{Ld,n-Ld}$ first shows that $\rho$ is
distinguished. It then determines the contributing orbit and, for $L>1$,
forces $\delta_{L-1}(\rho)$ to occur as a factor of $\sigma$. The
remaining factor is again distinguished. This is the rank reduction used
for the induction.

\subsection{The maximal-parabolic Mackey filtration}

Write a tempered representation as
\begin{equation}\label{eq:general-temp-converse}
   \pi=\delta_L(\rho)\times\sigma,
\end{equation}
where $\rho$ is a unitary supercuspidal representation of $G_d$ and
$\delta_L(\rho)$ is chosen with $L$ maximal among the lengths occurring on
the $\rho$-line. Since normalized induction of unitary discrete-series
representations of an inner form of a general linear group is irreducible and
independent of the order of the factors, we may place this factor first; see
\cite{Tadic1990,LapidMinguez2016}.

Put
$
   k=Ld,
   \qquad
   N=n-k,
   \qquad
   P=P_{k,N}.
$
The $H_n$-orbits in the maximal-parabolic Mackey filtration are indexed by
integers
\[
   0\le r\le\min\{k,N\}.
\]
Let $J_r$ denote the corresponding graded piece. Corollary~4.1 of \cite{SharmaVerma2026}, based on
\cite{BernsteinZelevinsky1977,Offen2017}, gives
\begin{equation}\label{eq:maximal-mackey-hom}
\begin{split}
 &\Hom_{H_n}(J_r,\C)\\
 &\qquad\simeq
 \Hom_{M_{k,r}}
 \left(
   r_{(r,k-r),(k)}\bigl(\delta_L(\rho)\bigr)
   \otimes
   r_{(N-r,r),(N)}(\sigma),
   \nu
 \right),
\end{split}
\end{equation}
where
$
   M_{k,r}
   \simeq
   \Delta(G_r\times G_r)
   \times H_{k-r}
   \times H_{N-r}.
$
If \eqref{eq:maximal-mackey-hom} is nonzero, Proposition~4.2 of \cite{SharmaVerma2026}, with the orbit description of
\cite{Offen2017}, implies that, after passing to an irreducible
subquotient
\[
   S\otimes T
   \quad\text{of}\quad
   r_{(N-r,r),(N)}(\sigma),
\]
the two $G_r$-pieces are related by a single $\nu$-twist, while the pieces
on the two symplectic factors are distinguished.

The Jacquet module of $\delta_L(\rho)$ is zero unless
$
   r=td
   \qquad(0\le t\le L).
$
For $0<t<L$, Lemma~\ref{lem:two-block-jacquet} gives
\begin{equation}\label{eq:maximal-jacquet-deltaL}
\begin{split}
 r_{(td,(L-t)d),(Ld)}\bigl(\delta_L(\rho)\bigr)
   &=A_t\otimes B_t,\\
 A_t&=\nu^{s(\rho)(L-t)/2}\delta_t(\rho),\\
 B_t&=\nu^{-s(\rho)t/2}\delta_{L-t}(\rho).
\end{split}
\end{equation}
With our ordering convention, nonvanishing of the $r$-th Mackey quotient
therefore forces
\begin{equation}\label{eq:maximal-three-conditions}
   A_t\simeq\nu T,
   \qquad
   \Hom_{H_{(L-t)d}}(B_t,\C)\neq0,
   \qquad
   \Hom_{H_{N-td}}(S,\C)\neq0.
\end{equation}
For $t=0$ the whole factor $\delta_L(\rho)$ is fixed by $H_{Ld}$; if the
full-pairing orbit $t=L$ is available, the whole $\delta_L(\rho)$ is paired
with the terminal $G_{Ld}$-piece of a Jacquet module of $\sigma$.

The following Jacquet-module formula and the terminal-support observation
will be used in the rank-reduction argument.

\begin{lemma}[Jacquet module of the maximal factor]
\label{lem:maximal-factor-jacquet}
Let $\rho\in\cD_d$ and $L\ge2$. For $0<t<L$,
\[
 r_{(td,(L-t)d),(Ld)}\bigl(\delta_L(\rho)\bigr)
 =
 \nu^{L-t}\delta_t(\rho)
 \otimes
 \nu^{-t}\delta_{L-t}(\rho).
\]
\end{lemma}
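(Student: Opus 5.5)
This is a direct specialization of Lemma~\ref{lem:two-block-jacquet}, so the plan is to invoke that lemma with $m=L$, $a=t$, and then simplify the $\nu$-exponents using $s(\rho)=2$.

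First, since $\rho\in\cD_d$ is unitary, supercuspidal and $H_d$-distinguished, Theorem~\ref{thm:ds} shows that $\JL(\rho)=\St_2(\sigma_\rho)$ is non-supercuspidal. Hence the reducibility number is $s(\rho)=2$, as recorded in \eqref{eq:rho-s-two-section5}, and so $\nu_\rho=\nu^2$.

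Second, for $0<t<L$ the hypothesis $1\le a\le m-1$ of Lemma~\ref{lem:two-block-jacquet} is satisfied. This is exactly why $L\ge2$ is assumed: otherwise there is no $t$ in the range. The lemma then gives
\[
 r_{(td,(L-t)d),(Ld)}\bigl(\delta_L(\rho)\bigr)
 =\nu^{s(\rho)(L-t)/2}\delta_t(\rho)\otimes\nu^{-s(\rho)t/2}\delta_{L-t}(\rho),
\]
which is \eqref{eq:maximal-jacquet-deltaL}. Substituting $s(\rho)=2$ turns the exponents into $L-t$ and $-t$, which is the claimed formula.

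As a sanity check, I would compare supports in $\nu$-exponents. The support of $\delta_L(\rho)$ is $\{1-L,3-L,\ldots,L-1\}$, and the Jacquet functor cuts this segment from the right. The right piece of length $t$ is $\{2L-2t+1-L,\ldots,L-1\}$, which equals $(L-t)+\{1-t,\ldots,t-1\}$; this is the support of $\nu^{L-t}\delta_t(\rho)$. The remaining piece is $\{1-L,\ldots,L-2t-1\}$, which equals $-t+\{1-(L-t),\ldots,(L-t)-1\}$; this is the support of $\nu^{-t}\delta_{L-t}(\rho)$.

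This also confirms the ordering convention, namely that the first Levi block receives the right-hand subsegment, consistent with \eqref{eq:segment-good-jacquet} and \eqref{eq:jacquet-Bm-master} (the case $L=m+1$, $t=1$). There is no real obstacle here. The only point needing care is this ordering convention for which subsegment goes to which block, and it is fixed by the earlier computation \eqref{eq:jacquet-Bm-master}.
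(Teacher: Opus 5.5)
Your proposal is correct and is exactly the paper's argument: the paper also specializes the two-block formula \eqref{eq:maximal-jacquet-deltaL} (Lemma~\ref{lem:two-block-jacquet} with $m=L$, $a=t$) to $s(\rho)=2$, which holds because $\rho$ is distinguished. Your cuspidal-support computation is a valid extra check that the first Levi block receives the upper subsegment, consistent with \eqref{eq:jacquet-Bm-master}.
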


\begin{proof}
Since $\rho$ is distinguished, $s(\rho)=2$. The assertion is therefore the
specialization of \eqref{eq:maximal-jacquet-deltaL} to $s(\rho)=2$.
\end{proof}

\begin{remark}[Terminal pieces of centered segments]
\label{rem:terminal-piece-shape}
Let
$   \sigma
   =
   \delta_{K_1}(\rho_1)\times\cdots\times\delta_{K_u}(\rho_u)
$
be tempered, and consider a Jacquet-module constituent
$ S\otimes T
   \quad\text{of}\quad
   r_{(N-r,r),(N)}(\sigma).
$
Suppose that $T$ lies on the cuspidal line of a unitary supercuspidal
representation $\rho$, and write $s=s(\rho)$.

The possible contribution to $T$ can be read from the endpoint of the
centered segment.
The representation $\delta_K(\rho)$ corresponds to the centered segment
\[
   \left[-\frac{K-1}{2},\frac{K-1}{2}\right]_\rho .
\]
When a Jacquet module separates off a piece into the second Levi factor,
that piece is taken from the lower end of this segment. This is the usual
segment description of Jacquet modules; see, for example,
\cite{Tadic1990} and \cite[\S2.4]{SharmaVerma2026}. Thus, if
$\delta_K(\rho)$ contributes nontrivially to $T$, the contribution starts
at the left endpoint of the segment. Since $\nu_\rho=\nu^{s}$, this
endpoint is
\[
   -\frac{s(K-1)}{2}
\]
when written in ordinary $\nu$-exponents.

Thus the initial exponent of a terminal piece determines the length $K$ of
the discrete-series factor from which it comes. We use this below to compare
terminal Jacquet pieces with the contribution from $\delta_L(\rho)$.
\end{remark}

\begin{remark}[Endpoint rigidity]
	\label{rem:adjacent-terminal-rigidity}
	In the situation of Remark~\ref{rem:terminal-piece-shape}, assume that
	$s(\rho)=2$, every factor on the $\rho$-line has length at most $L$, and
	\[
	T\simeq\delta_{L-1}(\rho).
	\]
	Then $T$ must come from one whole original factor
	$\delta_{L-1}(\rho)$ of $\sigma$; it cannot be assembled from proper
	terminal pieces of several factors.
	
	Indeed, the support of $T$ runs from $2-L$ to $L-2$. If a centered factor
	$\delta_K(\rho)$ supplies the top exponent $L-2$, then its terminal piece
	starts at $1-K$. Since this starting point must still lie in the support of
	$T$, one has $K\le L-1$; on the other hand, reaching the exponent $L-2$
	forces $K\ge L-1$. Hence $K=L-1$. A proper terminal piece of
	$\delta_{L-1}(\rho)$ stops before $L-2$, so this factor must contribute its
	whole segment. Its rank is already $(L-1)d$, the full rank of $T$, and
	there is therefore no room for any additional contribution.
\end{remark}
\subsection{Distinguished cuspidal support}

\begin{proposition}[Distinguished cuspidal support]
	\label{prop:distinguished-cuspidal-support}
	Every supercuspidal line occurring in the tempered support of an
	$H_n$-distinguished tempered representation contains a distinguished
	supercuspidal representation.
\end{proposition}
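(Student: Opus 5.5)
We argue by induction on $n$, using the maximal-parabolic Mackey filtration exactly as set up above. Let $\pi$ be tempered and $H_n$-distinguished, and suppose some supercuspidal line, generated by a unitary $\rho\in\Cusp(G_d)$, occurs in the support of $\pi$ but contains no distinguished supercuspidal. Among the discrete-series factors of $\pi$ on lines without a distinguished member, we choose $\rho$ and the length $L$ so that $\delta_L(\rho)$ has $L$ maximal on its line. We then write $\pi=\delta_L(\rho)\times\sigma$ as in \eqref{eq:general-temp-converse}. By Lemma~\ref{lem:dist-implies-good} some orbit $O_r$ in $P_{Ld,n-Ld}\backslash G_n/H_n$ is good. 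Since the Jacquet module of $\delta_L(\rho)$ vanishes unless $r=td$, we have $0\le t\le\min\{L,\,(n-Ld)/d\}$, and \eqref{eq:maximal-three-conditions} applies to a constituent $S\otimes T$ of the Jacquet module of $\sigma$.

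We rule out the cases one at a time. For $t=0$, the whole $\delta_L(\rho)$ must be $H_{Ld}$-distinguished. By Theorem~\ref{thm:ds} and twist invariance \eqref{eq:twist-dist}, this forces $L=1$ and $\rho$ distinguished, which contradicts the choice of line. For $0<t<L$, the fixed piece $B_t=\nu^{-s t/2}\delta_{L-t}(\rho)$ must be distinguished. Theorem~\ref{thm:ds} again forces $L-t=1$ and $\rho$ distinguished up to twist. Since $\nu|_{H_d}=1$, this means $\rho$ itself is distinguished, again a contradiction. The same reasoning applies to any fixed-point piece on the $\rho$-line: by the discussion after Corollary~\ref{cor:good-combinatorial}, a fixed piece must be a distinguished singleton.

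The remaining case is $t=L$. Here all of $\delta_L(\rho)$ is paired: $\delta_L(\rho)\simeq\nu T$, where $T$ is a terminal piece of a Jacquet module of $\sigma$ on the $\rho$-line. This is the main obstacle, and we use the endpoint argument of Remark~\ref{rem:terminal-piece-shape}. The support of $\nu^{-1}\delta_L(\rho)$ has lowest $\nu$-exponent $-s(L-1)/2-1$. Since $T$ is built from terminal (lower-end) pieces of centered segments $\delta_K(\rho)$ in $\sigma$, each of these starts at exponent $-s(K-1)/2$. Some factor must therefore supply the lowest exponent of $T$, which forces $s(K-1)/2=s(L-1)/2+1$. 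This gives $K>L$, contradicting the maximality of $L$. The case $s=1$ and the case $s=2$ both give $K>L$.

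A subtle point is that $T$ is only a constituent of a Jacquet module of a product. We handle it by the geometric lemma for Jacquet modules of products: every constituent of $r_{(N-r,r),(N)}(\sigma)$ is glued from segment pieces of the factors, so the cuspidal support of $T$ is a union of lower-end subsegments. The lowest exponent of $T$ must then be the starting exponent of one of these subsegments, which is all the argument above requires. Since every case leads to a contradiction, every line in the support of $\pi$ contains a distinguished supercuspidal. No induction is actually needed once $\rho$ is chosen with $L$ maximal among the non-distinguished lines. The one thing to check carefully is that the $t=L$ pairing really uses only $T$ on the $\rho$-line, which holds because $\nu T\simeq\delta_L(\rho)$ fixes the cuspidal support of $T$.
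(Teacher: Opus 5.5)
Your proposal is correct and follows essentially the same route as the paper. You use the $P_{Ld,n-Ld}$ Mackey filtration with $L$ maximal on a non-distinguished line, kill the orbits $0\le t<L$ with the discrete-series distinction theorem, and exclude the full-pairing orbit $t=L$ by a maximality contradiction from centered supports. The only differences are minor: the paper first deduces $s(\rho)=1$ from non-distinction (via the Jacquet--Langlands reducibility calculation) and then uses only the symmetry of centered cuspidal supports, whereas you keep $s$ general and use the lower-end structure of terminal Jacquet pieces, which works equally well (and, as you note, the induction on $n$ is not needed).
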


\begin{proof}
	Fix a unitary supercuspidal representation $\rho$ whose line occurs in the
	tempered support, and choose $\delta_L(\rho)$ of maximal length on that
	line. Reorder the tempered induction as in
	\eqref{eq:general-temp-converse}. Suppose that $\rho$ is not distinguished.
	Then, by \cite[Remark~1.1]{SharmaVerma2026}, using the Jacquet--Langlands
	reducibility calculation of \cite[Theorem~B.2.b]{DeligneKazhdanVigneras1984},
	and the discrete-series
	distinction theorem,
$
	s(\rho)=1.
$
	Since $\pi$ is distinguished, Lemma~\ref{lem:dist-implies-good} gives a good
	orbit in the $P_{Ld,N}$-Mackey filtration. Write $r=td$.
	
	If $t=0$, the open-orbit condition requires
	\[
	\Hom_{H_{Ld}}(\delta_L(\rho),\C)\neq0.
	\]
	For $L=1$ this contradicts the assumption on $\rho$; for $L>1$ it
	contradicts Theorem~\ref{thm:ds}.
	
	Assume $0<t<L$. By \eqref{eq:maximal-three-conditions},
$
	B_t=\nu^{-t/2}\delta_{L-t}(\rho)
$
	is distinguished. Since $\nu$ is trivial on every $H_j$, this is equivalent
	to distinction of $\delta_{L-t}(\rho)$. If $L-t>1$, this contradicts the
	discrete-series distinction theorem; if $L-t=1$, it says that $\rho$ is
	distinguished. Thus no interior orbit can be good.
	
	It remains to exclude the full-pairing orbit $t=L$. If this orbit is
	available, Proposition~4.2 of \cite{SharmaVerma2026}, with the orbit
	analysis of \cite{Offen2017}, forces a terminal
	constituent $T$ of a Jacquet module of $\sigma$ with
	\[
	\delta_L(\rho)\simeq\nu T,
	\qquad\text{hence}\qquad
	T\simeq\nu^{-1}\delta_L(\rho).
	\]
	We now exclude this possibility directly. Since $s(\rho)=1$, the ordinary
	$\nu$-cuspidal support of $\delta_L(\rho)$ is
	\[
	\left\{
	\nu^{-(L-1)/2}\rho,\nu^{-(L-3)/2}\rho,\ldots,
	\nu^{(L-1)/2}\rho
	\right\}.
	\]
	Consequently
	\[
	\operatorname{cusp}\bigl(\nu^{-1}\delta_L(\rho)\bigr)
	=
	\left\{
	\nu^{-(L+1)/2}\rho,\nu^{-(L-1)/2}\rho,\ldots,
	\nu^{(L-3)/2}\rho
	\right\}.
	\]
	In particular, $\nu^{-(L+1)/2}\rho$ occurs in the cuspidal support of $T$,
	and therefore in the cuspidal support of $\sigma$. Because $\sigma$ is
	tempered, every discrete-series factor on the $\rho$-line is centered, say
	$\delta_K(\rho)$. Hence a factor containing
	$\nu^{-(L+1)/2}\rho$ also contains $\nu^{(L+1)/2}\rho$. Since the extreme
	ordinary exponents of $\delta_K(\rho)$ are
	\[
	-\frac{K-1}{2}\qquad\text{and}\qquad \frac{K-1}{2},
	\]
	we must have
	\[
	\frac{K-1}{2}\geq \frac{L+1}{2},
	\qquad\text{so}\qquad K\geq L+2.
	\]
	This contradicts the maximality of $L$ on the $\rho$-line. Thus the
	full-pairing orbit is not good either.
	
	All possible orbits have been excluded, contradicting
	Lemma~\ref{lem:dist-implies-good}. Hence $\rho$ is distinguished.
\end{proof}

\subsection{The rank-reduction argument}

We now turn to the rank reduction. Suppose
\[
   \pi=\delta_L(\rho)\times\sigma
\]
is irreducible tempered and $H_n$-distinguished, where
$\rho\in\cD_d$, $L\ge2$, and $L$ is maximal among the lengths of the
factors on the $\rho$-line. Put
\[
   n_0=n-(2L-1)d.
\]
The reduction has the form
\begin{equation}\label{eq:rank-reduction-mechanism}
\begin{aligned}
 \pi=\delta_L(\rho)\times\sigma,\quad \Hom_{H_n}(\pi,\C)\neq0
 &\Longrightarrow r=(L-1)d,\\
 &\Longrightarrow T\simeq\delta_{L-1}(\rho),\\
 &\Longrightarrow \delta_{L-1}(\rho)\text{ is a whole factor of }\sigma,\\
 &\Longrightarrow \sigma\simeq\delta_{L-1}(\rho)\times\pi_0,\\
 &\Longrightarrow \Hom_{H_{n_0}}(\pi_0,\C)\neq0.
\end{aligned}
\end{equation}
Here $T$ denotes the terminal $G_{(L-1)d}$-factor in the Jacquet
constituent of $\sigma$ selected by the good orbit, and the complementary
Jacquet factor will be denoted by $S$.

To prove \eqref{eq:rank-reduction-mechanism}, use the maximal-parabolic
Mackey filtration for $P_{Ld,n-Ld}$. The open orbit
is excluded by the discrete-series distinction theorem. The full-pairing
closed orbit $r=Ld$ is excluded by Lemma~\ref{lem:closed-orbit-maximality}:
the diagonal pairing would force
\[
   T\simeq\nu^{-1}\delta_L(\rho),
\]
so $\nu^{-L}\rho$ would occur in the cuspidal support of $\sigma$; since
$\sigma$ is tempered, the relevant $\rho$-line factor is centered and would
also contain $\nu^{L}\rho$, forcing a factor of length at least $L+1$ and
contradicting maximality. For an interior orbit $r=td$, the fixed
symplectic factor forces $L-t=1$, hence $r=(L-1)d$.

For that unique orbit, the diagonal condition forces
$T\simeq\delta_{L-1}(\rho)$. The endpoint-rigidity observation in
Remark~\ref{rem:adjacent-terminal-rigidity} then shows that this $T$ cannot
be assembled from proper Jacquet pieces: one original factor of $\sigma$
is $\delta_{L-1}(\rho)$ and contributes wholly. Thus, after
reordering,
\[
   \sigma\simeq\delta_{L-1}(\rho)\times\pi_0,
   \qquad S\simeq\pi_0.
\]
Finally the remaining fixed symplectic factor in the orbit stabilizer is
$H_{n_0}$, so the good-orbit condition gives
$\Hom_{H_{n_0}}(\pi_0,\C)\neq0$.

The elementary-block reduction is the case $L=m+1$. We isolate first the
closed-orbit exclusion used in the argument.

\begin{lemma}[Closed-orbit exclusion on a distinguished line]
\label{lem:closed-orbit-maximality}
Let $\rho\in\cD_d$, let $L\geq1$, and let $\sigma$ be tempered. Assume that
$L$ is maximal among the lengths of the discrete-series factors occurring on
the $\rho$-cuspidal line in $\delta_L(\rho)\times\sigma$. Then no irreducible
constituent $T$ of a normalized Jacquet module of $\sigma$ can satisfy
\[
   T\simeq \nu^{-1}\delta_L(\rho).
\]
Consequently, in the maximal-parabolic Mackey filtration for
$\delta_L(\rho)\times\sigma$, the full-pairing (closed) orbit $r=Ld$ is not
good.
\end{lemma}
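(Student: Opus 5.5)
The plan is to argue by cuspidal support, just as in the exclusion of the full-pairing orbit in the proof of Proposition~\ref{prop:distinguished-cuspidal-support}, but now with $s(\rho)=2$.

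\textbf{Step 1: the twisted support.} Since $\rho\in\cD_d$, Theorem~\ref{thm:ds} gives $s(\rho)=2$, so $\nu_\rho=\nu^2$. By Remark~\ref{rem:adj}, the cuspidal support of $\delta_L(\rho)$ in ordinary $\nu$-exponents is
\[
\{1-L,\,3-L,\,\ldots,\,L-1\}.
\]
Hence
\[
\operatorname{cusp}\bigl(\nu^{-1}\delta_L(\rho)\bigr)=\{\nu^{-L}\rho,\,\nu^{2-L}\rho,\,\ldots,\,\nu^{L-2}\rho\}.
\]
In particular, $\nu^{-L}\rho$ belongs to the cuspidal support of any $T$ with $T\simeq\nu^{-1}\delta_L(\rho)$.

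\textbf{Step 2: transfer to $\sigma$.} The cuspidal support of an irreducible constituent of a normalized Jacquet module is a sub-multiset of the cuspidal support of the representation. This holds because Jacquet functors preserve cuspidal support as a multiset: the support of the Levi constituent $S\otimes T$ equals that of $\sigma$. Therefore $\nu^{-L}\rho$ lies in the cuspidal support of $\sigma$. Write $\sigma$ as a product of unitary discrete series, as in \eqref{eq:temp-decomp}. The element $\nu^{-L}\rho$ must then come from a factor $\delta_K(\rho')$ with $\nu^{-L}\rho$ on the $\rho'$-line. Since it is a unitary twist of $\rho$, and $\rho'$ is unitary with real exponent zero, we may take $\rho'\simeq\rho$. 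The one point to verify is that a unitary $\rho'$ with $\nu^{x}\rho'\simeq\nu^{-L}\rho$ forces $x=-L$ and $\rho'\simeq\rho$, by comparing central characters. This places the factor on the $\rho$-line.

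\textbf{Step 3: contradiction with maximality.} The factor $\delta_K(\rho)$ has symmetric support $\{1-K,\ldots,K-1\}$ in steps of $2$. Containing $-L$ forces $K-1\ge L$, that is, $K\ge L+1$. This contradicts the maximality of $L$ on the $\rho$-line. So no such $T$ exists.

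\textbf{Step 4: the orbit consequence.} By Proposition~\ref{prop:good-orbit-general} (or the discussion after \eqref{eq:maximal-three-conditions}), goodness of the closed orbit $r=Ld$ would require a constituent $S\otimes T$ of $r_{(N-Ld,Ld),(N)}(\sigma)$ with $\delta_L(\rho)\simeq\nu T$. Such a $T$ has just been excluded, so the orbit is not good. If $N<Ld$, the orbit is simply unavailable and there is nothing to prove.

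\textbf{Main obstacle.} The only delicate point is the normalization of the $\nu$-pairing direction in Step~4, namely that the condition reads $\delta_L(\rho)\simeq\nu T$ rather than $T\simeq\nu\,\delta_L(\rho)$. I will take it exactly as fixed by the ordering convention in \eqref{eq:maximal-three-conditions}. I will also note that the opposite orientation would instead produce the exponent $L$ and be excluded symmetrically by the same maximality argument.
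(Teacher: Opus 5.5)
Your proposal is correct and follows the paper's own argument. You use $s(\rho)=2$ to place $\nu^{-L}\rho$ in the support of $\nu^{-1}\delta_L(\rho)$, transfer it to the cuspidal support of $\sigma$, use centeredness of the tempered $\rho$-line factors to force a factor of length at least $L+1$, and then exclude the closed-orbit pairing $\delta_L(\rho)\simeq\nu T$. Your central-character check that the offending factor lies on the $\rho$-line, and your remark that the opposite orientation is excluded symmetrically via $\nu^{L}\rho$, are small refinements that the paper leaves implicit.
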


\begin{proof}
Since $\rho$ is distinguished, $s(\rho)=2$. Hence the ordinary
$\nu$-cuspidal support of $\delta_L(\rho)$ is
\[
   \{\nu^{1-L}\rho,\nu^{3-L}\rho,\ldots,\nu^{L-1}\rho\},
\]
and therefore
\[
   \operatorname{cusp}\bigl(\nu^{-1}\delta_L(\rho)\bigr)
   =
   \{\nu^{-L}\rho,\nu^{2-L}\rho,\ldots,\nu^{L-2}\rho\}.
\]
In particular, if a Jacquet constituent $T$ of $\sigma$ were isomorphic to
$\nu^{-1}\delta_L(\rho)$, then $\nu^{-L}\rho$ would occur in the cuspidal
support of $T$, and hence in the cuspidal support of $\sigma$. Normalized
Jacquet functors do not create new supercuspidal support: the supercuspidal
support of every Jacquet constituent is obtained from the supercuspidal
support of the original representation.

Because $\sigma$ is tempered, every discrete-series factor on the
$\rho$-line is centered, say $\delta_K(\rho)$, with ordinary support
\[
   \{\nu^{1-K}\rho,\nu^{3-K}\rho,\ldots,\nu^{K-1}\rho\}.
\]
Thus, if $\nu^{-L}\rho$ occurs in the support of such a factor, symmetry of
the centered segment implies that $\nu^{L}\rho$ occurs in the same factor.
Equivalently, $K-1\geq L$, and hence $K\geq L+1$. This contradicts the
maximality of $L$.

For the closed orbit $r=Ld$, the whole factor $\delta_L(\rho)$ lies on the
diagonal $G_{Ld}$-part of the stabilizer. The good-orbit pairing condition
would therefore force
$
   \delta_L(\rho)\simeq \nu T,
   \qquad\text{hence}\qquad
   T\simeq\nu^{-1}\delta_L(\rho),
$
which has just been shown to be impossible. Thus the closed orbit cannot
contribute.
\end{proof}

\subsection{The contributing orbit}

\begin{proposition}[The contributing orbit]
\label{prop:unique-orbit}
Let $
   \pi=\delta_L(\rho)\times\sigma
$
be irreducible tempered and $H_n$-distinguished. Assume that
$\rho\in\cD_d$, that $L\ge2$, and that $L$ is maximal among the lengths on
the $\rho$-line. Then every good orbit in the maximal-parabolic Mackey
filtration for $P_{Ld,N}$ has
\[
 r=(L-1)d.
\]
For a contributing irreducible subquotient
$S\otimes T$ of
$r_{(N-(L-1)d,(L-1)d),(N)}(\sigma)$ one has
$
  T\simeq\delta_{L-1}(\rho).
$
Consequently $\sigma$ contains a whole original factor
$\delta_{L-1}(\rho)$.
\end{proposition}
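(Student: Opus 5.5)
The plan is a case analysis over the orbits $O_r$, $0\le r\le\min\{Ld,N\}$, in the $P_{Ld,N}$ Mackey filtration. By Lemma~\ref{lem:dist-implies-good} at least one of them is good. Lemma~\ref{lem:two-block-jacquet} shows that the Jacquet module of $\delta_L(\rho)$ vanishes unless $r=td$ with $0\le t\le L$, so only these values need to be considered. We show that every value other than $t=L-1$ fails the conditions in \eqref{eq:maximal-three-conditions}.

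\emph{Case $t=0$ (open orbit).} Here the fixed factor is all of $\delta_L(\rho)$, so $\Hom_{H_{Ld}}(\delta_L(\rho),\C)\neq0$ would be required. Since $L\ge2$, $\delta_L(\rho)$ is not supercuspidal, and Theorem~\ref{thm:ds} rules this out.

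\emph{Case $t=L$ (closed orbit).} This orbit exists only when $Ld\le N$, and Lemma~\ref{lem:closed-orbit-maximality} excludes it directly.

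\emph{Case $0<t<L$.} By Lemma~\ref{lem:maximal-factor-jacquet}, the Jacquet module is the single irreducible $\nu^{L-t}\delta_t(\rho)\otimes\nu^{-t}\delta_{L-t}(\rho)$. The fixed piece $B_t=\nu^{-t}\delta_{L-t}(\rho)$ must be $H_{(L-t)d}$-distinguished, and twisting by $\nu$ does not affect this by \eqref{eq:twist-dist}. Theorem~\ref{thm:ds} then forces $L-t=1$, so $r=(L-1)d$.

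For this orbit we have $A_{L-1}=\nu\,\delta_{L-1}(\rho)$. The pairing condition $A_{L-1}\simeq\nu T$ therefore gives $T\simeq\delta_{L-1}(\rho)$ for the contributing subquotient $S\otimes T$. This is the second assertion.

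For the last assertion, write $\sigma$ as a product of centered discrete series. The Jacquet module of a product is filtered, by the geometric lemma for $\GL_n(D)$, with pieces that are products of Jacquet pieces of the individual factors. An irreducible subquotient $T$ of the terminal block is then a subquotient of a product of terminal pieces $T_j$, one from each factor. On the $\rho$-line each $T_j$ is a lower-end segment $[1-K_j,\,\cdot\,]$ in $\nu$-exponents.

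Since $T\simeq\delta_{L-1}(\rho)$ has support $\{2-L,\dots,L-2\}$, every contributing factor must satisfy $1-K_j\ge 2-L$, that is $K_j\le L-1$. The top exponent $L-2$ must come from some factor with $K_j\ge L-1$. That factor therefore has $K_j=L-1$, and its terminal piece must reach $L-2$, so it is the whole of $\delta_{L-1}(\rho)$. Comparing ranks, this factor already fills $T$, which is the content of Remark~\ref{rem:adjacent-terminal-rigidity}. It follows that $\delta_{L-1}(\rho)$ is a factor of $\sigma$.

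\textbf{Main obstacle.} The hard step is making this passage from $T$ to a subquotient of a product of terminal pieces fully rigorous, since $T$ need not be a tensor product of the individual pieces. Two facts should handle it. First, cuspidal supports are additive. Second, multiplicity counts of $\nu^{L-2}\rho$ inside the support of $T$ show that the factor supplying the top exponent cannot be a proper piece. With these, the endpoint argument of Remark~\ref{rem:adjacent-terminal-rigidity} applies to the whole multiset.
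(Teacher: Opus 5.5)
Your proposal is correct and follows essentially the same route as the paper. You exclude $t=0$ by Theorem~\ref{thm:ds}, $t=L$ by Lemma~\ref{lem:closed-orbit-maximality}, and $0<t<L-1$ by distinction of the fixed piece $\nu^{-t}\delta_{L-t}(\rho)$; then you read off $T\simeq\delta_{L-1}(\rho)$ from $A_{L-1}=\nu\delta_{L-1}(\rho)$ and conclude by the endpoint rigidity of Remark~\ref{rem:adjacent-terminal-rigidity}.

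Your explicit use of the geometric-lemma filtration of $r_{(N-(L-1)d,(L-1)d),(N)}(\sigma)$ together with additivity of cuspidal support is a more careful version of that remark. The multiplicity count in your ``main obstacle'' paragraph is not actually needed: the endpoint inequalities plus the rank comparison already finish the argument.
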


\begin{proof}
Since $\rho$ is distinguished, $s(\rho)=2$.

\emph{Step 1: exclude the open orbit and determine the interior orbit.}
The open orbit $r=0$ is not good, because it would require
$\delta_L(\rho)$ itself to be distinguished; this is impossible for
$L>1$ by Theorem~\ref{thm:ds}. Now let $r=td$ with $0<t<L$. By
Lemma~\ref{lem:maximal-factor-jacquet} and
\eqref{eq:maximal-three-conditions}, the fixed symplectic factor
\[
   \nu^{-t}\delta_{L-t}(\rho)
\]
must be distinguished. Since $\nu$ is trivial on the symplectic group,
this is equivalent to distinction of $\delta_{L-t}(\rho)$. Theorem
\ref{thm:ds} therefore forces
\[
   L-t=1.
\]
Hence the only possible interior good orbit is
$
  r=(L-1)d.
$

\emph{Step 2: exclude the full-pairing closed orbit.}
If $r=Ld$ is available, the whole factor $\delta_L(\rho)$ lies on the
diagonal $G_{Ld}$-part of the stabilizer. The diagonal pairing would force
\[
   \delta_L(\rho)\simeq\nu T,
   \qquad\text{hence}\qquad
   T\simeq\nu^{-1}\delta_L(\rho).
\]
Lemma~\ref{lem:closed-orbit-maximality} proves that this is impossible: it
would put $\nu^{-L}\rho$ in the cuspidal support of $\sigma$, and
centeredness of the tempered $\rho$-line factors would then force a factor
of length at least $L+1$, contradicting maximality. Thus $r=Ld$ does not
contribute.

\emph{Step 3: identify the terminal factor for the unique orbit.}
For $t=L-1$, Lemma~\ref{lem:maximal-factor-jacquet} gives
\begin{equation}\label{eq:unique-jacquet}
   r_{((L-1)d,d),(Ld)}\bigl(\delta_L(\rho)\bigr)
   =
   \nu\delta_{L-1}(\rho)
   \otimes
   \nu^{-(L-1)}\rho.
\end{equation}
Let $S\otimes T$ be a contributing irreducible subquotient of
$
   r_{(N-(L-1)d,(L-1)d),(N)}(\sigma).
$
The diagonal $G_{(L-1)d}$-condition in
\eqref{eq:maximal-three-conditions} is
$
   \nu\delta_{L-1}(\rho)\simeq\nu T,
$
and therefore
$
  T\simeq\delta_{L-1}(\rho).
$

By endpoint rigidity, namely Remark~\ref{rem:adjacent-terminal-rigidity},
this $T$ is contributed by one whole original
$\delta_{L-1}(\rho)$-factor of $\sigma$ and by no other factor. In particular, $\sigma$ contains a whole original
factor $\delta_{L-1}(\rho)$.
\end{proof}

\subsection{Rank reduction}

\begin{theorem}[Rank reduction]
\label{thm:rank-reduction}
Let $
   \pi=\delta_L(\rho)\times\sigma
$
be irreducible tempered and $H_n$-distinguished, and suppose $L$ is maximal
among the lengths occurring on the $\rho$-line. Then $\rho$ is
distinguished. Moreover:
\begin{enumerate}[label=\textup{(\roman*)}]
\item if $L=1$, then $\sigma$ is distinguished;
\item if $L\ge2$, then, after reordering the tempered factors,
\[
   \sigma\simeq\delta_{L-1}(\rho)\times\pi_0
\]
with $
   \Hom_{H_{n-(2L-1)d}}(\pi_0,\C)\neq0.
$

Equivalently, $
   \pi\simeq\cB_{L-1}(\rho)\times\pi_0
$
with $\pi_0$ tempered and distinguished.
\end{enumerate}
\end{theorem}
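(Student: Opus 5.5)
The argument is the maximal-parabolic Mackey filtration for $P=P_{Ld,N}$, $N=n-Ld$, applied to $\pi\simeq\Ind_P^{G_n}(\delta_L(\rho)\otimes\sigma)$. The identification of $\pi$ with this induced representation is legitimate because tempered products are irreducible and independent of the order of the factors.

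\emph{Distinction of $\rho$.} This is Proposition~\ref{prop:distinguished-cuspidal-support} applied to the line of $\rho$, so $\rho\in\cD_d$ and $s(\rho)=2$. Since $\pi$ is distinguished, Lemma~\ref{lem:dist-implies-good} gives a good orbit $O_r$. By the vanishing part of Lemma~\ref{lem:two-block-jacquet} we may write $r=td$ with $0\le t\le\min\{L,N/d\}$.

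\emph{Case (i), $L=1$.} Here $t\in\{0,1\}$.
\begin{itemize}[label=--]
\item If $t=1$, the orbit is the full-pairing one, and Lemma~\ref{lem:closed-orbit-maximality} excludes it: it would force $T\simeq\nu^{-1}\rho$, which puts $\nu^{-1}\rho$ in the support of $\sigma$ and hence produces a factor of length $\ge2$, contradicting maximality.
\item Hence $t=0$. This is the open orbit, and \eqref{eq:maximal-mackey-hom} reads $\Hom_{H_d\times H_N}(\rho\otimes\sigma,\C)\neq0$.
\end{itemize}
Because the Jacquet functors involved are trivial, this Hom-space is the tensor product of the two Hom-spaces. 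So $\Hom_{H_N}(\sigma,\C)\neq0$, as claimed.

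\emph{Case (ii), $L\ge2$.} Proposition~\ref{prop:unique-orbit} says the only good orbit is $r=(L-1)d$. For a contributing subquotient $S\otimes T$ of $r_{(N-(L-1)d,(L-1)d),(N)}(\sigma)$ it gives $T\simeq\delta_{L-1}(\rho)$, coming from a whole factor of $\sigma$. Reordering the tempered factors, we write $\sigma\simeq\delta_{L-1}(\rho)\times\pi_0$. The remaining task is to show that $\pi_0$ is distinguished, by identifying $S$ with $\pi_0$.

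Write $\sigma=\delta_{L-1}(\rho)\times\delta_{K_2}(\rho_2)\times\cdots$ and compute $r_{(N-(L-1)d,(L-1)d)}(\sigma)$ by the geometric lemma for Jacquet modules of products. Its constituents are indexed by choosing, for each factor, a lower terminal piece sent to the $G_{(L-1)d}$-block.
\begin{itemize}[label=--]
\item By Remark~\ref{rem:adjacent-terminal-rigidity}, the only constituents with $T\simeq\delta_{L-1}(\rho)$ are those where one factor $\delta_{L-1}(\rho)$ goes entirely to $T$ and every other factor goes entirely to $S$.
\item Each such constituent has $S\simeq\pi_0$. If $\delta_{L-1}(\rho)$ occurs with multiplicity, all the resulting $S$ are still isomorphic to $\pi_0$, because the product of the remaining factors is irreducible and independent of order.
\end{itemize}
Condition \eqref{eq:maximal-three-conditions} then says that the fixed factor $S\simeq\pi_0$ is $H_{N-(L-1)d}$-distinguished. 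The rank is $N-(L-1)d=n-(2L-1)d=n_0$, so $\Hom_{H_{n_0}}(\pi_0,\C)\neq0$. Finally, $\pi\simeq\delta_L(\rho)\times\delta_{L-1}(\rho)\times\pi_0=\cB_{L-1}(\rho)\times\pi_0$, and $\pi_0$ is tempered because it is a product of unitary discrete series.

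The main obstacle is the passage from ``some subquotient $S\otimes T$ satisfies the good-orbit conditions'' to the distinction of $\pi_0$ itself. Proposition~\ref{prop:good-orbit-general} is only stated with respect to a constituent of a possibly reducible Jacquet module. The plan is therefore to show that the relevant part of $r(\sigma)$ is a direct summand, isomorphic to copies of $\pi_0\otimes\delta_{L-1}(\rho)$. For this I would separate the summand by its cuspidal support (central character of the Bernstein component), using the endpoint rigidity above: all other constituents have a $G_{(L-1)d}$-part with different support. A nonzero functional on the Jacquet module then restricts to a nonzero functional on that summand, which yields a nonzero element of $\Hom_{H_{n_0}}(\pi_0,\C)\otimes\Hom_{\Delta G}(\delta_{L-1}(\rho)\otimes\delta_{L-1}(\rho)^{\vee}\text{-type pairing})$, and hence $\Hom_{H_{n_0}}(\pi_0,\C)\neq0$.
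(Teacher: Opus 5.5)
Your proposal is correct and is essentially the paper's proof. For $L=1$ you exclude the full-pairing orbit by Lemma~\ref{lem:closed-orbit-maximality} and read off distinction of $\sigma$ from the open orbit; for $L\ge2$ you use Proposition~\ref{prop:unique-orbit} and endpoint rigidity to get $\sigma\simeq\delta_{L-1}(\rho)\times\pi_0$ with $S\simeq\pi_0$, and then the fixed $H_{n_0}$-condition. The direct-summand detour in your last paragraph is unnecessary: your bullet points already show that every constituent with $T\simeq\delta_{L-1}(\rho)$ has $S\simeq\pi_0$, so the constituent form of Proposition~\ref{prop:good-orbit-general} suffices (and as phrased, restricting the functional to the summand would also need the remark that the complementary summand admits no invariant functional).
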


\begin{proof}
Proposition~\ref{prop:distinguished-cuspidal-support} first shows that
$\rho$ is distinguished.

Suppose $L=1$. The only possible good orbit is the open orbit. Indeed, if
the full-pairing orbit is available, it would force a terminal constituent
$\nu^{-1}\rho$, excluded by Lemma~\ref{lem:closed-orbit-maximality}; if it
is not available, there is nothing to exclude. The open-orbit formula is
\[
   \Hom_{H_d}(\rho,\C)
   \otimes
   \Hom_{H_{n-d}}(\sigma,\C),
\]
and the first factor is nonzero. Hence $\sigma$ is distinguished.

Now assume $L\ge2$. We use the rank-reduction argument
\eqref{eq:rank-reduction-mechanism} step by step.

\emph{Step 1: the unique good orbit.}
By Proposition~\ref{prop:unique-orbit}, every good orbit has
\[ r=(L-1)d.
\]
The proof of that proposition includes the full exclusion of the closed
orbit $r=Ld$: its pairing would force
$\nu^{-1}\delta_L(\rho)$ in a Jacquet module of $\sigma$, hence
$\nu^{-L}\rho$ in the cuspidal support of $\sigma$, which by centeredness
would force a $\rho$-line factor of length at least $L+1$, contrary to the
maximality of $L$.

\emph{Step 2: the adjacent discrete-series factor occurs in the Jacquet
module of $\sigma$.}
Let
$
   S\otimes T
$
be a constituent of
\[
   r_{(n_0,(L-1)d)}(\sigma),
   \qquad n_0=n-(2L-1)d,
\]
selected by the good orbit. From \eqref{eq:unique-jacquet}, the
$G_{(L-1)d}$-part coming from $\delta_L(\rho)$ is
$\nu\delta_{L-1}(\rho)$. Hence the diagonal pairing gives
\[
   \nu\delta_{L-1}(\rho)\simeq\nu T,
\]
so
$ T\simeq\delta_{L-1}(\rho).
$

\emph{Step 3: $\delta_{L-1}(\rho)$ is a whole original factor of
$\sigma$.}
The ordinary $\nu$-support of $T=\delta_{L-1}(\rho)$ is
\[
   2-L,\ 4-L,\ldots,\ L-2.
\]
Choose an original $\rho$-line factor $\delta_K(\rho)$ of $\sigma$ whose
terminal Jacquet contribution supplies the top exponent $L-2$. Such a
terminal piece begins at $1-K$. Since all its exponents must lie in the
support of $T$, we have
\[
   1-K\ge2-L,
\]
so $K\le L-1$. On the other hand, in order to contain the top exponent
$L-2$ one must have
\[
   K-1\ge L-2,
\]
so $K\ge L-1$. Therefore
$
   K=L-1.
$

A proper lower terminal piece of $\delta_{L-1}(\rho)$ stops before the top
endpoint $L-2$. Hence this factor must contribute its whole centered
segment to $T$. Since its rank is already $(L-1)d$, the full rank of $T$,
no other original factor contributes to the terminal block. Thus, after
reordering,
\[  \sigma\simeq\delta_{L-1}(\rho)\times\pi_0
   \]
for a tempered representation $\pi_0\in\Irr(G_{n_0})$, and the complementary
Jacquet factor is
$ S\simeq\pi_0.
$

This is precisely the endpoint-rigidity observation of
Remark~\ref{rem:adjacent-terminal-rigidity} applied to the present good
orbit.

\emph{Step 4: the remainder $\pi_0$ is distinguished.}
For $r=(L-1)d$, the Levi part of the orbit stabilizer is
\[
   \Delta\bigl(G_{(L-1)d}\times G_{(L-1)d}\bigr)
   \times H_d\times H_{n_0}.
\]
The three good-orbit conditions are therefore
\[
   \nu\delta_{L-1}(\rho)\simeq\nu T,
   \qquad
   \Hom_{H_d}(\nu^{-(L-1)}\rho,\C)\neq0,
   \qquad
   \Hom_{H_{n_0}}(S,\C)\neq0.
\]
The first condition has already identified $T$. The second is nonzero
because $\rho$ is distinguished and $\nu|_{H_d}=1$. Since the orbit is
good, the third condition is also nonzero. Using $S\simeq\pi_0$ gives
$ \Hom_{H_{n_0}}(\pi_0,\C)\neq0.
$
Thus
\[
   \pi\simeq
   \delta_L(\rho)\times\delta_{L-1}(\rho)\times\pi_0
   =\cB_{L-1}(\rho)\times\pi_0,
\]
with $\pi_0$ tempered and distinguished, completing the rank reduction.
\end{proof}

\begin{corollary}[Rank reduction for an elementary block]
\label{thm:block-rank-reduction}
Let $\rho\in\cD_d$, let $m\ge1$, and let
$\pi_0\in\Irr(G_{n_0})$ be tempered. Put
\[
   \Pi
   =\cB_m(\rho)\times\pi_0
   =\delta_{m+1}(\rho)\times\delta_m(\rho)\times\pi_0,
\]
and assume that $\delta_{m+1}(\rho)$ has maximal length among the factors
on the $\rho$-line. If $\Pi$ is $H_n$-distinguished, then $\pi_0$ is
$H_{n_0}$-distinguished.
\end{corollary}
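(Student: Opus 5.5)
This corollary is the case $L=m+1\ge2$ of Theorem~\ref{thm:rank-reduction}, applied to $\Pi$ with $\sigma=\delta_m(\rho)\times\pi_0$. The plan is to put $\Pi$ in the form \eqref{eq:general-temp-converse}, namely $\Pi=\delta_{m+1}(\rho)\times\sigma$. This is allowed because products of unitary discrete series of $G_n$ are irreducible and independent of the order of the factors. By hypothesis, $L=m+1$ is maximal on the $\rho$-line of $\Pi$. Since $\rho\in\cD_d$, we have $s(\rho)=2$, so Lemma~\ref{lem:maximal-factor-jacquet} and Proposition~\ref{prop:unique-orbit} apply directly.

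First, Proposition~\ref{prop:unique-orbit} applies to the $P_{(m+1)d,N}$ Mackey filtration, with $N=n-(m+1)d=md+n_0$. It shows that the only good orbit is $r=md$, and that the contributing constituent $S\otimes T$ of $r_{(n_0,md),(N)}(\sigma)$ has $T\simeq\delta_m(\rho)$. Second, Theorem~\ref{thm:rank-reduction}(ii) gives $\sigma\simeq\delta_m(\rho)\times\pi_0'$ with $\pi_0'$ tempered and $H_{n_0}$-distinguished, where $n_0=n-(2m+1)d$.

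It remains to identify $\pi_0'$ with $\pi_0$. We have
\[
\delta_m(\rho)\times\pi_0\simeq\delta_m(\rho)\times\pi_0',
\]
and both sides are irreducible tempered representations. By the uniqueness of the decomposition \eqref{eq:temp-decomp} as a multiset of discrete-series factors, removing one copy of $\delta_m(\rho)$ from each side leaves the same multiset. Hence $\pi_0\simeq\pi_0'$, and $\pi_0$ is $H_{n_0}$-distinguished.

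The main point to watch is Step~3 of Theorem~\ref{thm:rank-reduction}: the claim $S\simeq\pi_0$, that is, that the complementary Jacquet factor equals the remaining product. If one prefers not to rely on that step, the cancellation argument above bypasses it. The theorem still produces some distinguished $\pi_0'$ with $\sigma\simeq\delta_m(\rho)\times\pi_0'$, and the identification then comes from uniqueness of the tempered decomposition. So the only real input is the uniqueness in \eqref{eq:temp-decomp}.
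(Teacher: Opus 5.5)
Your argument is correct and is the paper's own: apply Theorem~\ref{thm:rank-reduction}(ii) with $L=m+1$ and $\sigma=\delta_m(\rho)\times\pi_0$; your cancellation via the uniqueness in \eqref{eq:temp-decomp} makes explicit the identification of the theorem's $\pi_0$ with the given one, which the paper leaves implicit. One caveat about your last paragraph: the cancellation does not bypass Step~3 of the theorem, because the theorem's output is shown to be distinguished in Step~4 precisely from $\Hom_{H_{n_0}}(S,\C)\neq0$ together with $S\simeq\pi_0$.
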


\begin{proof}
Apply Theorem~\ref{thm:rank-reduction} with $L=m+1$ and
$\sigma=\delta_m(\rho)\times\pi_0$. The theorem removes the elementary
block $\cB_m(\rho)$ and gives
\[
   \Hom_{H_{n_0}}(\pi_0,\C)\neq0.
\]
\end{proof}

\subsection{Induction and the converse}

\begin{theorem}[General converse]\label{thm:general-converse}
Let $\pi\in\Irr(G_n)$ be  an irreducible tempered representation. If
$
   \Hom_{H_n}(\pi,\C)\neq0,
$
then
\[ \pi
   \simeq
   \cB_{m_1}(\rho_1)
   \times\cdots\times
   \cB_{m_s}(\rho_s),\]
where $\rho_i\in\cD$ and $m_i\ge0$.
\end{theorem}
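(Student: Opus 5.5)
The argument is a strong induction on $n$, with Theorem~\ref{thm:rank-reduction} doing all the representation-theoretic work. The statement for $n=0$ (the empty product) is trivial. So let $n\ge1$ and assume the assertion for every $n'<n$. Take a tempered $H_n$-distinguished $\pi\in\Irr(G_n)$ and write it as in \eqref{eq:temp-decomp}. Choose any unitary supercuspidal $\rho\in\Cusp(G_d)$ whose line occurs in the tempered support of $\pi$. Among the factors on that line, pick one of maximal length $L$. Because tempered products are irreducible and independent of order, we may write $\pi=\delta_L(\rho)\times\sigma$ as in \eqref{eq:general-temp-converse}, where $\sigma$ is the (irreducible, tempered) product of the remaining factors.

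We now apply Theorem~\ref{thm:rank-reduction}. It first gives $\rho\in\cD_d$, via Proposition~\ref{prop:distinguished-cuspidal-support}. Then there are two cases.
\begin{itemize}
\item If $L=1$, then $\sigma$ is $H_{n-d}$-distinguished. Since $n-d<n$, the induction hypothesis writes $\sigma$ as a product of blocks, and so $\pi\simeq\cB_0(\rho)\times\sigma$ has the required form.
\item If $L\ge2$, then $\pi\simeq\cB_{L-1}(\rho)\times\pi_0$, with $\pi_0$ tempered and $H_{n_0}$-distinguished, where $n_0=n-(2L-1)d<n$. The induction hypothesis applies to $\pi_0$, and prefixing $\cB_{L-1}(\rho)$ gives the required decomposition.
\end{itemize}
In both cases the rank bookkeeping $\sum(2m_i+1)r_i=n$ is automatic, because $\cB_m(\rho)$ is a representation of $G_{(2m+1)d}$.

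A few routine points need checking along the way.
\begin{itemize}
\item The remainder ($\sigma$ or $\pi_0$) is again irreducible tempered. This holds because it is a product of unitary discrete series.
\item The reordering $\delta_L(\rho)\times\delta_{L-1}(\rho)\times\pi_0\simeq\cB_{L-1}(\rho)\times\pi_0$ is legitimate. This follows from Lemma~\ref{lem:block-temp} together with the commutativity of irreducible tempered products.
\item The case $\sigma$ or $\pi_0$ of rank zero is harmless, since it corresponds to an empty product.
\end{itemize}

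The substance, and the main obstacle, sits inside Theorem~\ref{thm:rank-reduction}, specifically Step~3. There one must show that the Jacquet constituent $T\simeq\delta_{L-1}(\rho)$ of $\sigma$ comes from a whole original factor, and that the complementary constituent is $S\simeq\pi_0$ (rather than merely some Jacquet constituent of $\pi_0$ twisted together with leftover pieces). In this write-up I take that theorem as given. If I were to revisit it, I would check that the constituent $S\otimes T$ of $r_{(n_0,(L-1)d)}(\sigma)$ with $T\simeq\delta_{L-1}(\rho)$ occurs only through the geometric-lemma term in which the factor $\delta_{L-1}(\rho)$ goes entirely to the second block and every other factor goes entirely to the first. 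That identification is exactly what forces $S\simeq\pi_0$.
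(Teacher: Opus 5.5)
Your proposal is correct and is essentially the paper's proof. You pick a factor $\delta_L(\rho)$ of maximal length on some line, use Proposition~\ref{prop:distinguished-cuspidal-support} and Theorem~\ref{thm:rank-reduction} to split off $\cB_0(\rho)$ or $\cB_{L-1}(\rho)$ with a tempered distinguished remainder, and induct; the only cosmetic difference is that you induct on the rank $n$ while the paper inducts on the number of discrete-series factors, and both strictly decrease at each step.
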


\begin{proof}
We induct on the number of unitary discrete-series factors in the tempered
support of $\pi$. Choose a supercuspidal line and a factor
$\delta_L(\rho)$ of maximal length on that line, and place it first.
Proposition~\ref{prop:distinguished-cuspidal-support} shows that $\rho$ is
distinguished, and Theorem~\ref{thm:rank-reduction} gives the corresponding
reduction.

If $L=1$, Theorem~\ref{thm:rank-reduction}\textup{(i)} gives
$ \pi\simeq\cB_0(\rho)\times\pi_0
$
with $\pi_0$ tempered and distinguished. If $L\ge2$, part \textup{(ii)}
gives
$ \pi\simeq\cB_{L-1}(\rho)\times\pi_0
$
with $\pi_0$ again tempered and distinguished. In either case the number
of discrete-series factors strictly decreases. Applying the induction
hypothesis to $\pi_0$ gives the required product of elementary blocks.
\end{proof}

The existence theorem and the converse completes the proof of the Theorem~\ref{conj:tempered-classification}  and provides the classification of temepered distinguished representations.

\begin{theorem}[Prasad's revised tempered conjecture]
\label{thm:main-classification}
Let $\pi\in\Irr(G_n)$ be an irreducible tempered representation. Then
\[
   \Hom_{H_n}(\pi,\C)\neq0
\]
if and only if
\[  \pi
   \simeq
   \cB_{m_1}(\rho_1)
   \times\cdots\times
   \cB_{m_s}(\rho_s),
   \]
with $\rho_i\in\cD$ and $m_i\ge0$.
\end{theorem}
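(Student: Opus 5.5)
The statement is the conjunction of the two directions already established, so the plan is to assemble them. For the ``if'' direction I will invoke Corollary~\ref{cor:if-tempered}. Each block $\cB_0(\rho)=\rho$ with $\rho\in\cD$ is distinguished by definition, and each $\cB_m(\rho)$ with $m\ge1$ is distinguished by Theorem~\ref{thm:Bm-general}. The latter comes from the closed orbit $O_{md}$ for $P_{md,(m+1)d}$, using the Jacquet module \eqref{eq:jacquet-Bm-master} and the pull-back argument of Remark~\ref{rem:good-not-sufficient}. The hereditary property then makes the full product distinguished. I will also note that the product is indeed an irreducible tempered representation, since it is a product of unitary discrete series (Lemma~\ref{lem:block-temp} together with the irreducibility of tempered induction), so the statement makes sense.

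For the ``only if'' direction I will cite Theorem~\ref{thm:general-converse}, which proceeds by induction on the number of discrete-series factors. On some cuspidal line I choose a factor $\delta_L(\rho)$ of maximal length. Proposition~\ref{prop:distinguished-cuspidal-support} forces $\rho\in\cD$. Theorem~\ref{thm:rank-reduction} then splits off either $\rho$ (when $L=1$, via the open orbit) or $\cB_{L-1}(\rho)$ (when $L\ge2$, via the unique good orbit $r=(L-1)d$ and endpoint rigidity). In both cases the remaining factor $\pi_0$ is tempered and distinguished, and the induction hypothesis applies to it.

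The main obstacle is not the assembly but the soundness of the inputs used in the converse. The step I would scrutinize is Step~3 of Theorem~\ref{thm:rank-reduction}. There a constituent $T\simeq\delta_{L-1}(\rho)$ of a Jacquet module of $\sigma$ is shown to come from a whole factor, and the complement $S$ is identified with $\pi_0$. In general $S$ is only a constituent of a Jacquet module of $\pi_0$ paired against the other pieces, so I would re-derive $S\simeq\pi_0$ from the geometric lemma for $\sigma$. The argument is that if $\delta_{L-1}(\rho)$ is contributed wholly by one factor, then every other factor contributes nothing to the terminal block, which forces $S$ to be exactly the product of the remaining factors. If needed, I would pass to the constituent and use the hereditary structure. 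Finally, I would check that the bookkeeping $\sum(2m_i+1)r_i=n$ is automatic from the ranks of the blocks.
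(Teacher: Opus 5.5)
Your proposal is correct and follows the paper's own proof. The ``if'' direction is Theorem~\ref{thm:Bm-general} combined with the hereditary property (Corollary~\ref{cor:if-tempered}), and the ``only if'' direction is Theorem~\ref{thm:general-converse}, via the rank reduction of Theorem~\ref{thm:rank-reduction}. Your extra geometric-lemma check that $S\simeq\pi_0$ is sound and is exactly what the paper's Step~3 uses implicitly: one whole factor $\delta_{L-1}(\rho)$ goes to the terminal block, every other factor goes wholly to the first block, so $S$ is a constituent of the irreducible product $\pi_0$.
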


\begin{proof}
The ``only if'' direction is Theorem~\ref{thm:general-converse}. For the
``if'' direction, every $\cB_m(\rho)$ is distinguished by
Theorem~\ref{thm:Bm-general}, with $m=0$ interpreted as a distinguished
supercuspidal singleton, and arbitrary products are distinguished by the
hereditary property.
\end{proof}


\begin{thebibliography}{99}

\bibitem{ABPS2016}
A.-M. Aubert, P. Baum, R. Plymen and M. Solleveld,
\emph{Depth and the local Langlands correspondence},
in \emph{Arbeitstagung Bonn 2013},
Progress in Mathematics, Vol.~319,
Birkh\"auser/Springer, Cham, 2016, pp.~17--41.

\bibitem{HeumosRallis1990}
M.~J.~Heumos and S.~Rallis,
\emph{Symplectic--Whittaker models for $\GL_n$},
Pacific J. Math. {146} (1990), no.~2, 247--279.

\bibitem{OffenSayag2007}
O.~Offen and E.~Sayag,
\emph{On unitary representations of $\GL_{2n}$ distinguished by the
symplectic group},
J. Number Theory {125} (2007), no.~2, 344--355.

\bibitem{OffenSayag2008Global}
O.~Offen and E.~Sayag,
\emph{Global mixed periods and local Klyachko models for the general linear
group},
Int. Math. Res. Not. IMRN (2008), no.~1, 25 pp.

\bibitem{OffenSayag2008Uniqueness}
O.~Offen and E.~Sayag,
\emph{Uniqueness and disjointness of Klyachko models},
J. Funct. Anal. {254} (2008), no.~11, 2846--2865.


\bibitem{dprasad}
D.~Prasad,
\emph{Email correspondence with M.~K.~Verma concerning tempered
representations of $\GL_n(D)$ with symplectic period},
August 2026.


\bibitem{BernsteinZelevinsky1977}
I.~N.~Bernstein and A.~V.~Zelevinsky,
\emph{Induced representations of reductive $p$-adic groups. I},
Ann. Sci. \'Ecole Norm. Sup. (4) {10} (1977), 441--472.

\bibitem{DeligneKazhdanVigneras1984}
P.~Deligne, D.~Kazhdan and M.-F.~Vign\'eras,
\emph{Repr\'esentations des alg\`ebres centrales simples $p$-adiques},
in \emph{Representations of reductive groups over a local field},
Travaux en Cours, Hermann, Paris, 1984, 33--117.

\bibitem{MatringeOffenYang2024}
N.~Matringe, O.~Offen and C.~Yang,
\emph{On local intertwining periods},
J. Funct. Anal. {286} (2024), no.~4, 110293.

\bibitem{Mitra2014}
A.~Mitra,
\emph{On representations of $\GL_{2n}(F)$ with a symplectic period},
Pacific J. Math. {268} (2014), 435--463.

\bibitem{MitraOffenSayag2017}
A.~Mitra, O.~Offen and E.~Sayag,
\emph{Klyachko models for ladder representations},
Doc. Math. {22} (2017), 611--657.

\bibitem{LapidMinguez2016}
E.~Lapid and A.~M\'inguez,
\emph{On parabolic induction on inner forms of the general linear group over
a non-archimedean local field},
Selecta Math. (N.S.) {22} (2016), no.~4, 2347--2400.


\bibitem{Matringe2026}
N.~Matringe,
\emph{Cuspidal distinction via functional equation of intertwining periods},
preprint, 2026.


\bibitem{Tadic2015}
M.~Tadi\'c,
\emph{Remark on representation theory of general linear groups over a
	non-archimedean local division algebra},
Rad Hrvatske Akademije Znanosti i Umjetnosti, Matemati\v{c}ke Znanosti
{19 (523)} (2015), 27--53.


\bibitem{MSSS2026}
N.~Matringe, V.~S\'echerre, S.~Stevens and M.~Suzuki,
\emph{Discrete series representations of quaternionic $\GL_n(D)$ with
symplectic periods},
to appear in J. Reine Angew. Math.; arXiv:2503.08955.

\bibitem{LapidMinguez2014}
E.~Lapid and A.~M\'inguez,
\emph{On a determinantal formula of Tadi\'c},
Amer. J. Math. {136} (2014), 111--142.

\bibitem{Offen2017}
O.~Offen,
\emph{On parabolic induction associated with a $p$-adic symmetric space},
J. Number Theory {170} (2017), 211--227.


\bibitem{SharmaZelevinsky2026}
H.~Sharma,
\emph{Symplectic model for Zelevinsky modules of $\GL(n,D)$},
Algebr. Represent. Theory (2026).

\bibitem{SharmaVerma2025}
H.~Sharma and M.~K.~Verma,
\emph{Symplectic period for a representation of $\GL_n(D)$},
J. Pure Appl. Algebra {229} (2025), 107976.

\bibitem{SharmaVerma2026}
H.~Sharma and M.~K.~Verma,
\emph{Symplectic model for ladder and unitary representations},
J. Pure Appl. Algebra {230} (2026), 108334.

\bibitem{Tadic1990}
M.~Tadi\'c,
\emph{Induced representations of $\GL(n,A)$ for $p$-adic division algebras
$A$},
J. Reine Angew. Math. {405} (1990), 48--77.

\bibitem{Verma2018}
M.~K.~Verma,
\emph{On symplectic periods for inner forms of $\GL_n$},
Math. Res. Lett. \textbf{25} (2018), no.~1, 309--334.

\end{thebibliography}
\end{document}